\documentclass[11pt]{article}
\usepackage{cite}
\usepackage{mathrsfs}
\usepackage{amsmath, amssymb, amsthm, latexsym, amsfonts, epsfig, color,graphicx,authblk}
\allowdisplaybreaks[4]
\numberwithin{equation}{section}
\textwidth 6.5in \oddsidemargin 0in \textheight 9in \topmargin
-0.5in

\begin{document}

\newcommand{\s}{\sigma}
\renewcommand{\k}{\kappa}
\newcommand{\p}{\partial}
\newcommand{\D}{\Delta}
\newcommand{\om}{\omega}
\newcommand{\Om}{\Omega}
\renewcommand{\phi}{\varphi}
\newcommand{\e}{\epsilon}
\renewcommand{\a}{\alpha}
\renewcommand{\b}{\beta}
\newcommand{\N}{{\mathbb N}}
\newcommand{\R}{{\mathbb R}}
   \newcommand{\eps}{\varepsilon}
   \newcommand{\EX}{{\Bbb{E}}}
   \newcommand{\PX}{{\Bbb{P}}}

\newcommand{\cF}{{\cal F}}
\newcommand{\cG}{{\cal G}}
\newcommand{\cD}{{\cal D}}
\newcommand{\cO}{{\cal O}}

\newcommand{\de}{\delta}

\newcommand{\grad}{\nabla}
\newcommand{\n}{\nabla}
\newcommand{\curl}{\nabla \times}
\newcommand{\dive}{\nabla \cdot}

\newcommand{\ddt}{\frac{d}{dt}}
\newcommand{\la}{{\lambda}}

\newtheorem{theorem}{Theorem}[section]
\newtheorem{lemma}{Lemma}[section]
\newtheorem{remark}{Remark}[section]
\newtheorem{example}{Example}[section]
\newtheorem{definition}{Definition}[section]
\newtheorem{corollary}{Corollary}[section]
\newtheorem{assumption}{Assumption}[section]
\newtheorem{prop}{Proposition}[section]
\newtheorem{notation}{Notation}[section]
\def\proof{\mbox {\it Proof.~}}
\makeatletter
\@addtoreset{equation}{section}
\makeatother
\renewcommand{\theequation}{\arabic{section}.\arabic{equation}}

 \makeatletter\def\theequation{\arabic{section}.\arabic{equation}}\makeatother

\title{ On the limit distribution for  stochastic differential equations driven by cylindrical non-symmetric $\alpha$-stable L\'{e}vy processes}
\author[a]{\textbf{Ting Li}}
\author[b]{\textbf{Hongbo Fu} \thanks {Corresponding author: hbfu@wtu.edu.cn}}
\author[a]{\textbf{Xianming Liu}}
\affil[a]{School of Mathematics and Statistics,
\authorcr \textit{Huazhong University of Science and Technology, Wuhan 430074, P. R. China.}}
\affil[b]{Research Center for Applied Mathematics and Interdisciplinary Sciences,
	\authorcr \textit{School of Mathematical and Physical Sciences,}
	\authorcr \textit{Wuhan Textile University, Wuhan 430073, P. R. China.}}
\renewcommand{\Affilfont}{\small\it}
\renewcommand\Authands{ \textbf{and} }
 \date{\today }
\maketitle

\begin{abstract}
This article deals with the limit distribution for  a stochastic differential equation
driven by a non-symmetric cylindrical $\alpha$-stable process. Under suitable conditions, it is proved that
the solution of this equation converges weakly to that of a  stochastic differential equation driven by a  Brownian motion
in the Skorohod space as $\alpha\rightarrow2$.  Also, the rate of  weak convergence, which depends on $ 2-\alpha$, for the
solution  towards the solution of the limit equation is obtained. For illustration, the results are applied to a  simple one-dimensional stochastic
differential equation, which implies the rate of  weak convergence is optimal.

\end{abstract}

\noindent {\it \footnotesize Key words}. {\scriptsize
$\alpha$-stable L\'{e}vy process;  Tightness;
Rate of weak convergence; Backforward Kolmogorov equation}

%



\setcounter{secnumdepth}{5} \setcounter{tocdepth}{5}

\makeatletter
    \newcommand\figcaption{\def\@captype{figure}\caption}
    \newcommand\tabcaption{\def\@captype{table}\caption}
\makeatother


%


\section{\bf Introduction} \label{pre}

Stochastic differential equations (SDEs) driven by Wiener noise and L\'{e}vy noise have important applications to finance and physics. It is well known that Brownian motion is an $\alpha$-stable L\'{e}vy process with $\alpha=2$.  For one-dimensional stochastic process, we know that an $\alpha$-stable process converges in distribution to a Brownian motion as $\alpha\rightarrow2$ (see Sato \cite{Sato}).  Then a natural question arises: for the  SDE driven by an $\alpha$-stable process, dose its solution  converge in distribution to that of an SDE driven by a Brownian motion?

Liu \cite{Liu} studied weak convergence for the solution of an SDE driven by the symmetric stable process, and the $\alpha$-continuity of the solution to an SDE driven by an $\alpha$-stable process as $\alpha$ tends to $\alpha^* \in (1,2]$, but the case of non-symmetric $\alpha$-stable process  is not  well studied. In this paper, we study the weak convergence behavior as $\alpha \rightarrow 2$  for the solutions of SDEs driven by non-symmetic $\alpha$-stable processes in the Skorokhod space $D_{J_1}([0,T], \mathbb{R}^d$) (the space $D([0,T], \mathbb{R}^d)$ equipped with the $J_1$-metric, also see \cite{Bill}).

Let  $(\Omega,\mathcal{F},\{\mathcal{F}_t\}_{t\geq 0},\mathbb{P})$  be
a filtered probability space, which satisfies the usual conditions.  Consider the following equations,
\begin{align}
dX^{\alpha,\beta} _t&=b(X^{\alpha,\beta} _t)dt+ \sigma (X^{\alpha,\beta} _{t-}) dL^{\alpha,\beta}_t,
\quad X^{\alpha,\beta}_0=x\in \mathbb{R}^d,   \label{e:0} \\
dX _t&=b(X_t)dt+ \sqrt{2}\sigma (X_{t}) dB_t,\quad  X_0=x, \label{e2}
\end{align}
where $b(x)=(b_1(x),\cdot\cdot\cdot,b_d(x))$  is a vector function, $\sigma(x)=(\sigma_{ij}(x))_{d\times d}$ is a $d\times d$ matrix-valued function. For more assumptions on the drift  and diffusion coefficients,  see Assumption \ref{asp1} and Assumption\ref{asp2}. The noise $B_t=(B_t^1, \cdots, B_t^d)$ is a $d$-dimensional Brownian motion, and $L^{\alpha,\beta}_t=(L^{\alpha,\beta_1}_t,\cdots,L^{\alpha,\beta_d}_t)$ is a $d$-dimensional cylindrical  non-symmetric $\alpha$-stable process. For each $i\in\{1,2,...,d\}$,  $L^{\alpha,\beta_i}_t$ is an one-dimensional non-symmetric $\alpha$-stable L\'{e}vy process and $N^{\alpha,\beta_i}$ is an $\mathcal{F}_t$-Poisson random measure with intensity measure $\nu^{\alpha,\beta_i}(dz)dt$. The  L\'{e}vy measure $\nu^{\alpha,\beta_i}$ on $\mathbb{R}$  is defined as
\begin{equation}\label{e1}
\nu^{\alpha,\beta_i}(dz)=\left\{
\begin{array}{rcl}
\frac{C_1^idz}{|z|^{1+\alpha}},&&{z > 0},\\
\ \\
\frac{C_2^i dz}{|z|^{1+\alpha}},&&{z < 0},
\end{array}\right.
\end{equation}
with $\beta_i\in [-1,1]$, \  $C_1^i=K_\alpha^i\frac{1+\beta_i}{2}$ and $C_2^i=K_\alpha^i\frac{1-\beta_i}{2}$, where
\begin{equation}\nonumber
K_\alpha^i=\left\{
\begin{array}{rcl}
\frac{\alpha(1-\alpha)}{\Gamma(2-\alpha)cos(\frac{\pi\alpha}{2})},&&{\alpha \neq 1},\\
\ \\
\frac{2}{\pi}\             ,&&{\alpha=1}.
\end{array}\right.
\end{equation}
In particular, $\nu^{\alpha,\beta_i}$ is said to be symmetric if $\beta_i = 0$.
Furthermore, the compensated Poisson random measure $\tilde{N}^{\alpha,\beta_i}$ is defined
by
\[
\tilde{N}^{\alpha,\beta_i}(dt,dz):=N^{\alpha,\beta_i}(dt,dz)- \nu^{\alpha,\beta_i}(dz)dt.
\]

 According to
L\'{e}vy-It\^{o}'s decomposition,  we have
\begin{equation*}\label{levy-ito-1}
L^{\alpha,\beta_i}_t =  \int_0^t \int_{|z|\leq1} z
\tilde{N}^{\alpha,\beta_i}(ds,dz) + \int_0^t \int_{|z|>1} z
N^{\alpha,\beta_i}(ds,dz),\quad \alpha \in (1,2).
\end{equation*}
For every $i\in\{1,2,...,d\}$,
\begin{equation*}\label{e1-eqv}
dX^{\alpha,\beta_i} _t=b_i(X^{\alpha,\beta}_t)dt+\sum_{j=1}^d\int_{|z_j|\leq 1}\sigma_{ij}
(X^{\alpha,\beta}_{t-}) z_j \tilde{N}^{\alpha,\beta_j}(dt,dz_j) +\sum_{j=1}^d \int_{|z_j|>1}\sigma_{ij}(X^{\alpha,\beta}_{t-}) z_j
N^{\alpha,\beta_j}(dt,dz_j),
\end{equation*}
is the $i$th component  of SDE \eqref{e:0}.

We will prove the solution of  Equation \eqref{e:0}, whose noise is non-symmetric $\alpha$-stable, converges weakly to the solution of Equation \eqref{e2} in Skorohod space (see Section 3), which we prove by using the  Aldous' criterion.  To this purpose
we need to establish the tightness of the approximating solution  and this can be obtained by a priori estimate
 $ \sup\limits_{\alpha\geq\alpha_0} \EX [\sup\limits_{0\leq t\leq T} |X^{
    \alpha,\beta}_t|^\theta] <\infty. $
Secondly, by martingale characterization, we show that the limit (in the sense of distributions) of the solution of the approximating equation  is the solution of an SDE driven by a Brownian motion.

While the weak convergence properties are established for SDEs driven by non-symmetric $\alpha$-stable processes in Skorohod space, the rate of convergence is still not easily derived. To our knowledge, many works are devoted to study the convergence rate of the solution  for an  SDE, where
both the approximation equation and the limit equation taking the same type of noise term (see, for instance, \cite{rate} and \cite{rate1}).  Here we will derive
 the explicit error bounds on the difference between the solution of  \eqref{e:0}  and the solution of   limit equation \eqref{e2}, where the two noises appearing in above two equations are
 totally different.  To this
purpose, we introduce the backward Kolmogorov equation associated with the solution of SDE \eqref{e2} to construct
an evolutionary equation that describes the  weak difference. Furthermore, we need to ensure that the solution of SDE \eqref{e:0} converges in weak sense to the solution of SDE \eqref{e2} under Assumption \ref{asp2}. For simplicity, we only establish the uniform estimates on solution  of approximating equation  (see Lemma \ref{lem4.2}).  And the corresponding proof of tightness and martingale characterization is omitted. Finally,  to illustrate the estimate \eqref{wr} is the optimal, we present a special case in Example \ref{example}.

The organization of this paper is given as follows. We introduce some notations,  basic settings and state the main results in Section 2.  Section 3 contains the proof of the main results. Section 4 is devoted to the rate of weak convergence and an
illustrative example.

\section{\bf Notation and main results}

Denote $H=[0,T]\times \mathbb{R}^d,\  \mathbb{N}=\{0, 1, 2, ...\}$.  For $(t,x)$ in $H$, $\eta=(\eta_1, \cdots, \eta_d)$ in  $(\mathbb{N}^+)^d$ is a multi-index, $|\eta|=\eta_1 +\cdots+ \eta_d$, and $D^{\eta}
=\frac{\partial^{|\eta|}}{\partial x_1^{\eta_1}\cdots\partial x_d^{\eta_d}}$. 	We denote
\begin{align}\nonumber
&\partial_t u(t,x)=\frac{\partial}{\partial t}u(t,x), \ \ \ \ \  D^ku(t,x)=(D^{\eta}u(t,x))_{|\eta|=k}, k\in\mathbb{N},\\ \nonumber
&\partial_i u(t,x)=u_{x_i}(t,x), \ \ \ \ \  \partial^2_{ij} u(t,x)=u_{x_i x_j}(t,x), \ \  i, j \in \{1,\cdots, d\} \\ \nonumber
&\partial_xu(t,x)=\nabla u(t,x)=\nabla_x u(t,x)=(\partial_1 u(t,x),\cdots, \partial_du(t,x)). \nonumber
\end{align}
Let $\kappa \in  \mathbb{N}^+$ and $0< \gamma \le 1$. And let $ C^{\kappa, \gamma}(H)$ denote the space of measurable functions in $H$ such that  the derivatives $D_x^\eta u(t,x)$ are continuous with respect to $(t,x)$ for all $1\le|\eta|\le\kappa$ and  the derivatives are locally $\gamma-$H${\rm \ddot{o}}$lder continuous with respect to $x$ for $|\eta|=\kappa$. We denote its norm by
\begin{align} \nonumber
||u||_{\kappa,\delta}:=\sum_{ |\eta|\le \kappa } |D_x^\eta u(t,x)|_0+\sup_{|\eta|=\kappa  ,
\atop t,\ x \neq x^1 }   \frac{|D_x^\eta u(t,x)- D_x^\eta u(t,x)|}{|x-x^1|^{\gamma}}, \nonumber
\end{align}
where $|u|_0 = \sup_{(t,x)\in H}|u(t,x)|$. We denote by $C^{\kappa, \gamma}(\mathbb{R}^d)$ the corresponding function sapce on $\mathbb{R}^d$.

The letter $C=C(\cdot, ... , \cdot)$  is a constant, which only depends on quantities appearing in parentheses. In a given context, the same letter is (generally) used to denote different constants depending on the same set of arguments. Moreover, all of the constant $C$ in this study are independent of $\alpha$, but it possibly depends on $\beta$.

We denote by $H(f)$  the Hessian matrix of $f$ and let $H_x(u(t,x))$ be the Hessian matrix of $u$ with respect to $x$. Let $\sigma^*$  be the transpose of $\sigma$,  and set
\begin{equation}\nonumber
\sigma=(\sigma_1,\cdots, \sigma_d),\ \  \sigma_i=(\sigma_{1i},\cdots,\sigma_{di})^* .
\end{equation}
We use the Einstein summation convention throughout this paper, for example if $x,y$ in $\mathbb{R}^d$, \ then
\begin{equation}\nonumber
x y =\sum_{i=1}^d x_iy_i.
\end{equation}

 Now, we make some assumptions on the drift and diffusion coefficients.

 \begin{assumption} \label{asp1}
(a) There is a constant $K_{\sigma}$ such
that for every $x,y \in \mathbb{R}^d$,
\begin{align*}
\|\sigma(x)-\sigma(y)\|_{HS}&\leq K_\sigma|x-y|,
\end{align*}
where $||\cdot||_{\text{HS}}$ denotes the Hilbert-Schmidt norm of a
matrix.

(b) There exists a positive number $\delta_0$, such that for all $R>0$, and $x,y\in\mathbb{R}^d$, with $\max{\{|y|,|x|\}}\le R$
and $|y-x|\le\delta_0$, satisfying
\begin{equation}\nonumber
\langle y-x , b(y)-b(x) \rangle \le K_{b} |y-x|^2, \nonumber
\end{equation}
where $K_b$ is a positive constant. Also,  there exists a constant $C>0$, such that for all $x\in\mathbb{R}^d$, satisfying
\begin{equation}\nonumber
|b(x)|\le C(1+|x|^r), \nonumber
\end{equation}
where $r\in(0,+\infty)$.
\end{assumption}

Observe that in view of the above assumptions, drift $b$ is dissipative and locally bounded. The second item ensures the tightness of the approximating solution (see Lemma \ref{lem2.4}), which is   similar to that  in \cite{SGI}.  With our assumptions we have the following main result which is proved later in section 3.
\begin{theorem}\label{thm1}
If  Assumption \ref{asp1} is satisfied. Let $X_t^{\alpha,\beta}$ be the solution of SDE \eqref{e:0}, then there
exists a measure $\mu$ defined on $D([0,T],\mathbb{R}^d)$ so that
\begin{equation}\label{main1} \nonumber
\mathcal{L} (X^{\alpha,\beta}) \rightarrow  \mu \quad \text{as} \quad
\alpha\rightarrow 2,
\end{equation}
in $D_{J_1}([0,T],\mathbb{R}^d)$. Moreover, $\mu$ is the law of
solution to SDE \eqref{e2}.

\end{theorem}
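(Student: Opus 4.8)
The plan is to follow the classical two-step route for weak convergence in Skorokhod space: first establish tightness of the family $\{\mathcal{L}(X^{\alpha,\beta})\}$ as $\alpha\to 2$, so that by Prohorov's theorem every sequence has a subsequential limit; and then identify each such limit as the law of the solution of \eqref{e2} via the associated martingale problem, using the well-posedness of \eqref{e2} to conclude that all limit points coincide and the whole family converges.

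I would begin with the uniform moment bound. Fix $\alpha_0\in(1,2)$, restrict to $\alpha\in[\alpha_0,2)$, and choose an exponent $\theta\in(1,\alpha_0)$ so that $\theta<\alpha$ for every admissible $\alpha$; this makes the large jumps of $L^{\alpha,\beta}$ have finite $\theta$-th moment. Applying It\^o's formula for jump processes to $|X^{\alpha,\beta}_t|^\theta$, invoking the dissipativity and polynomial growth of $b$ from Assumption \ref{asp1}(b), the Lipschitz bound on $\sigma$ from Assumption \ref{asp1}(a), and a Burkholder--Davis--Gundy estimate for the stable stochastic integral, I would derive $\sup_{\alpha\in[\alpha_0,2)}\EX[\sup_{0\le t\le T}|X^{\alpha,\beta}_t|^\theta]<\infty$. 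The crucial point is that all constants be independent of $\alpha$: here the normalization $K^i_\alpha$ is essential, since $\int_{|z|\le1}z^2\,\nu^{\alpha,\beta_i}(dz)=K^i_\alpha/(2-\alpha)\to 2$ and $\int_{|z|>1}\nu^{\alpha,\beta_i}(dz)=K^i_\alpha/\alpha\to 0$ as $\alpha\to2$, so the relevant moments of $\nu^{\alpha,\beta_i}$ stay bounded. Markov's inequality then gives tightness of $\{X^{\alpha,\beta}_t\}$ at each fixed $t$. For the temporal part I would verify Aldous' criterion: for any $\{\mathcal{F}_t\}$-stopping time $\tau\le T$ and $h>0$, I estimate the increment $X^{\alpha,\beta}_{(\tau+h)\wedge T}-X^{\alpha,\beta}_\tau$ by splitting the stable integral into its compensated small-jump martingale and its finite-activity large-jump part; the drift contributes $O(h)$ by the growth bound and the moment estimate, the small-jump martingale is controlled in $L^\theta$ through the uniformly bounded truncated moments of $\nu^{\alpha,\beta_i}$, and the large-jump part is dominated by its intensity, which is $O(2-\alpha)$. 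Via Markov's inequality this yields $\lim_{h\to0}\limsup_{\alpha\to2}\sup_{\tau}\PX\big(|X^{\alpha,\beta}_{(\tau+h)\wedge T}-X^{\alpha,\beta}_\tau|>\eta\big)=0$ for each $\eta>0$, and hence tightness in $D_{J_1}([0,T],\mathbb{R}^d)$, as in Lemma \ref{lem2.4}.

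It remains to identify any weak limit point $\mu$ obtained along $\alpha_n\to2$ with $\mathcal{L}(X^{\alpha_n,\beta})\Rightarrow\mu$. Writing $\mathcal{A}^{\alpha,\beta}$ for the generator of \eqref{e:0}, for $f\in C_b^2(\mathbb{R}^d)$,
\begin{equation}\nonumber
\mathcal{A}^{\alpha,\beta}f(x)=b(x)\cdot\nabla f(x)+\sum_{j=1}^{d}\int_{\mathbb{R}}\Big(f(x+z\sigma_j(x))-f(x)-\mathbf{1}_{\{|z|\le1\}}\,z\,\sigma_j(x)\cdot\nabla f(x)\Big)\,\nu^{\alpha,\beta_j}(dz),
\end{equation}
the process $M^{\alpha,f}_t=f(X^{\alpha,\beta}_t)-f(x)-\int_0^t\mathcal{A}^{\alpha,\beta}f(X^{\alpha,\beta}_s)\,ds$ is a martingale. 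A Taylor expansion together with the normalization of $\nu^{\alpha,\beta_j}$ shows $\mathcal{A}^{\alpha,\beta}f\to\mathcal{A}f$ as $\alpha\to2$, where $\mathcal{A}f(x)=b(x)\cdot\nabla f(x)+\mathrm{tr}\big(\sigma(x)\sigma^*(x)H_x(f)\big)$ is the generator of \eqref{e2}: the small-jump part converges to $\sum_{j}\sigma_j^*(x)H_x(f)\sigma_j(x)=\mathrm{tr}(\sigma(x)\sigma^*(x)H_x(f))$ because $\int_{|z|\le1}z^2\,\nu^{\alpha,\beta_j}(dz)\to2$, the large-jump part vanishes as its intensity is $O(2-\alpha)$, and — the delicate point — the asymmetry contributions proportional to $C_1^i-C_2^i=K^i_\alpha\beta_i$ also vanish, since the leading odd term carries the factor $K^i_\alpha\to0$, so no spurious drift survives. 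Passing to the limit along $\alpha_n$ in $\EX\big[(M^{\alpha_n,f}_t-M^{\alpha_n,f}_s)\,g(X^{\alpha_n,\beta}_{s_1},\dots,X^{\alpha_n,\beta}_{s_k})\big]=0$, using $\mathcal{L}(X^{\alpha_n,\beta})\Rightarrow\mu$, the convergence $\mathcal{A}^{\alpha_n,\beta}f\to\mathcal{A}f$, and the fact that the limit paths are continuous (so the evaluation functionals are $\mu$-a.s.\ continuous at the chosen times, for all but countably many times), I conclude that $f(Y_t)-f(x)-\int_0^t\mathcal{A}f(Y_s)\,ds$ is a $\mu$-martingale for every $f\in C_b^2$; that is, $\mu$ solves the martingale problem for $\mathcal{A}$. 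Since Assumption \ref{asp1} ensures existence and uniqueness in law for \eqref{e2}, this martingale problem is well posed, whence $\mu$ is the law of the solution of \eqref{e2}, and since every subsequential limit equals this unique $\mu$, the full family converges.

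The main obstacle is concentrated in the first stage: producing the moment bound and verifying Aldous' criterion with constants that are genuinely uniform in $\alpha$ as $\alpha\to2$, which requires careful control of the $\alpha$-dependence hidden in $K^i_\alpha$ and in the heavy tails of $\nu^{\alpha,\beta_i}$. The analogous subtlety in the identification step is to confirm that the non-symmetry ($\beta_i\neq0$) generates no first-order drift correction in the limit, which again hinges on the vanishing of $K^i_\alpha$.
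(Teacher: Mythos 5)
Your proposal follows essentially the same route as the paper: a uniform moment bound giving tightness via Aldous' criterion, the observation that any weak limit is supported on continuous paths, locally uniform convergence of the generators $\mathcal{L}^{\alpha,\beta}_{b,\sigma}f\to\mathcal{L}_{b,\sigma}f$ driven by $\int_{|z|\le\delta}|z|^2\nu^{\alpha,\beta}(dz)\to 2$ and $\int_{|z|>\delta}|z|\nu^{\alpha,\beta}(dz)\to 0$ (which, as you note, also kills any asymmetry-induced drift since $K_\alpha\to 0$), and identification of the limit through the well-posed martingale problem for \eqref{e2}. The one technical divergence is the moment lemma: the paper applies It\^{o}'s formula to $(1+|x|^2)^{1/2}$ and invokes the stochastic Gronwall inequality of Xie--Zhang to obtain only $\theta\in(0,1)$ moments, whereas your plan to get $\theta\in(1,\alpha_0)$ moments of $|X^{\alpha,\beta}|$ directly via BDG risks a circularity (the bracket of the compensated small-jump integral involves $|\sigma(X)|^2\sim|X|^2$, which a $\theta$-th moment with $\theta<2$ cannot close under a naive Gronwall); since any $\theta>0$ suffices for the Chebyshev and Aldous steps, this does not affect the overall argument.
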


Note that this theorem is different from \cite[Theorem 1.1]{Liu},  where the noise is symmetric and rotationally invariant $\alpha$-stable L\'{e}vy process and the drift coefficient satisfies global Lipschitz condition, while in our setting the noise is cylindrical non-symmetric $\alpha$-stable L\'{e}vy process and the drift is non-Lipschitz.  In order to investigate the convergence rate, we need to impose the following assumptions on the coefficients $b$ and $\sigma$.

\begin{assumption}\label{asp2}
Let $0< \gamma \le 1$, $b_i(x) \in {C}^{2, \gamma}(\mathbb{R}^d)$, $\sigma_{ij}(x) \in  {C}^{2, \gamma}(\mathbb{R}^d)$, for all $1\le i,j \le d.$
\end{assumption}

The above assumptions are similar to the conditions imposed on the drift and jump coefficients in \cite{rate}. And it ensures the existence and uniqueness for the backward Kolmogorov equation which is used  to derive the rate of weak convergence.  We  state the following theorem.

%

\begin{theorem} \label{thm2}
If  Assumption \ref{asp2} is satisfied. Let $X_t$ be the solution of SDE \eqref{e2}, then for each $f\in {C}^{2,\gamma}(\mathbb{R}^d)$, we have
\begin{equation} \label{wr}
\EX[f(X_t^{\alpha,\beta})-f(X_t)]\le C(2-\alpha).
\end{equation}
\end{theorem}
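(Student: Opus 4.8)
The plan is to transfer the weak error onto the backward Kolmogorov equation of the limiting diffusion \eqref{e2} and then to compare, quantitatively, the nonlocal generator of \eqref{e:0} against the local generator of \eqref{e2}. Write
\[
\mathcal{L}g=b\cdot\nabla g+\sum_{j=1}^{d}\sigma_j^*\,H_x(g)\,\sigma_j
\]
for the generator of \eqref{e2} (the coefficient $1$ in front of the second-order part is produced by the $\sqrt2$ in \eqref{e2}, since $\tfrac12(\sqrt2\sigma)(\sqrt2\sigma)^*=\sigma\sigma^*$), and
\[
\mathcal{A}^{\alpha}g=b\cdot\nabla g+\sum_{j=1}^{d}\int_{\R}\big[g(x+\sigma_jz)-g(x)-\mathbf 1_{\{|z|\le1\}}z\,\sigma_j\cdot\nabla g\big]\,\nu^{\alpha,\beta_j}(dz)
\]
for the generator of \eqref{e:0}. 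For a fixed terminal time $t$ and $f\in C^{2,\gamma}(\R^d)$, set $u(s,x):=\EX[f(X_t^{s,x})]$, where $X^{s,x}$ solves \eqref{e2} from $x$ at time $s\le t$; then $u$ solves $\partial_s u+\mathcal{L}u=0$, $u(t,\cdot)=f$. Under Assumption \ref{asp2} the coefficients $b,\sigma$ are bounded with bounded, $\gamma$-Hölder second derivatives, and I would first record the uniform regularity $\sup_{0\le s\le t}\|u(s,\cdot)\|_{2,\gamma}\le C\|f\|_{2,\gamma}$ with $C=C(t,b,\sigma)$ independent of $\alpha$.

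Since $X_0=x$ we have $u(0,x)=\EX[f(X_t)]$ and $u(t,X_t^{\alpha,\beta})=f(X_t^{\alpha,\beta})$. Applying Dynkin's formula to $u(\cdot,X^{\alpha,\beta})$ on $[0,t]$ — legitimate because $u$ and $\nabla u$ are bounded, so the compensated-jump martingale is centered, the large-jump and time-integrability being handled by Lemma \ref{lem4.2} — and using $\partial_s u=-\mathcal{L}u$, I obtain the exact error representation
\[
\EX\big[f(X_t^{\alpha,\beta})\big]-\EX\big[f(X_t)\big]=\EX\int_0^{t}\big(\mathcal{A}^{\alpha}-\mathcal{L}\big)u\big(s,X_s^{\alpha,\beta}\big)\,ds .
\]
Hence it suffices to prove the pointwise estimate $\big|(\mathcal{A}^{\alpha}-\mathcal{L})u(s,x)\big|\le C(2-\alpha)$ uniformly in $(s,x)$; integrating over $[0,t]$ then gives $\le Ct(2-\alpha)\le C(2-\alpha)$, which is \eqref{wr}.

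In $(\mathcal{A}^{\alpha}-\mathcal{L})u$ the drifts cancel, and with $\sum_j\sigma_j^*H_x(u)\sigma_j=(\sigma\sigma^*)_{ik}\partial^2_{ik}u$ I treat each $j$-term by splitting at $|z|=1$. On $\{|z|\le1\}$ Taylor's theorem gives $u(x+\sigma_jz)-u(x)-z\,\sigma_j\cdot\nabla u=\tfrac{z^2}{2}\sigma_j^*H_x(u)\sigma_j+R_j(x,z)$ with $|R_j|\le C\|u\|_{2,\gamma}|z|^{2+\gamma}$, so the principal part is $\tfrac12\sigma_j^*H_x(u)\sigma_j\int_{|z|\le1}z^2\,\nu^{\alpha,\beta_j}(dz)$. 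The decisive computation is
\[
\int_{|z|\le1}z^2\,\nu^{\alpha,\beta_j}(dz)=\frac{C_1^j+C_2^j}{2-\alpha}=\frac{K_\alpha^j}{2-\alpha}=\frac{\alpha(1-\alpha)}{\Gamma(3-\alpha)\cos(\tfrac{\pi\alpha}{2})}\longrightarrow 2\quad(\alpha\to2),
\]
where I used $(2-\alpha)\Gamma(2-\alpha)=\Gamma(3-\alpha)$; expanding $\Gamma$ and $\cos$ near $\alpha=2$ yields $\big|\tfrac12\cdot\frac{K_\alpha^j}{2-\alpha}-1\big|\le C(2-\alpha)$, so the principal part matches $\sigma_j^*H_x(u)\sigma_j$ up to $O(2-\alpha)$. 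For the remainder, $\int_{|z|\le1}|R_j|\,\nu^{\alpha,\beta_j}(dz)\le C K_\alpha^j/(2+\gamma-\alpha)=O(2-\alpha)$ because $K_\alpha^j=O(2-\alpha)$; and on $\{|z|>1\}$ boundedness of $u$ gives $\big|\int_{|z|>1}[u(x+\sigma_jz)-u(x)]\,\nu^{\alpha,\beta_j}(dz)\big|\le 2|u|_0 K_\alpha^j/\alpha=O(2-\alpha)$, which also absorbs the effective drift produced by the non-symmetry of $\nu^{\alpha,\beta_j}$. Summing over $j$ proves the pointwise bound.

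The principal difficulty is the uniform $C^{2,\gamma}$-bound on $u$: the remainder estimate $|R_j|\le C|z|^{2+\gamma}$ needs the $\gamma$-Hölder seminorm of the second spatial derivatives of $u$ to be finite and independent of $\alpha$. Since Assumption \ref{asp2} does not assume uniform ellipticity of $\sigma\sigma^*$, I would not rely on parabolic Schauder theory but instead differentiate the stochastic flow of \eqref{e2} twice, using that $b,\sigma\in C^{2,\gamma}$ makes the first and second variation processes well posed with moments bounded uniformly on $[0,t]$, and that the $\gamma$-Hölder continuity of the second derivatives of $f,b,\sigma$ transfers to $D^2_x u$. A secondary point is the rigorous justification of Dynkin's formula and of Fubini in the representation; here no moment of $X^{\alpha,\beta}$ beyond Lemma \ref{lem4.2} is needed, because the integrand $(\mathcal{A}^{\alpha}-\mathcal{L})u$ is already uniformly bounded, so the expectation and the time integral are harmless.
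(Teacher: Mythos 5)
Your proposal follows the same skeleton as the paper's proof: represent the weak error through the backward Kolmogorov equation of \eqref{e2}, apply It\^{o}/Dynkin to $u(\cdot,X^{\alpha,\beta})$ to reduce \eqref{wr} to a pointwise bound on $(\mathcal{A}^{\alpha}-\mathcal{L})u$, and control that difference using the explicit integrals of $|z|^2$ and $|z|$ against $\nu^{\alpha,\beta}$ (the content of Lemma \ref{lem2.2}, in particular \eqref{eq3.5}). Two points differ in a way worth recording. First, you split the L\'{e}vy integral at $|z|=1$ and bound the Taylor remainder explicitly by $C\|u\|_{2,\gamma}|z|^{2+\gamma}$, so that its contribution is $\int_{|z|\le1}|z|^{2+\gamma}\nu^{\alpha,\beta_j}(dz)=K_\alpha^j/(2+\gamma-\alpha)=O(2-\alpha)$; the paper instead splits at a small $\delta$ and disposes of the analogous term $\mathcal{A}-\mathcal{D}$ only qualitatively (``choose $\delta\ll1$ so that it is sufficiently small''), which as written does not visibly produce the rate $2-\alpha$ — your version is the quantitatively complete one, and it is available precisely because $K_\alpha=O(2-\alpha)$. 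Second, you obtain the $\alpha$-uniform $C^{2,\gamma}$ bound on $u$ by differentiating the stochastic flow of \eqref{e2}, whereas the paper imports it from Mikulevicius's PDE result (Lemma \ref{lem4.1}); your route avoids any ellipticity discussion but is only sketched, and carrying it out (well-posedness of the second variation process and transfer of the H\"older seminorm to $D_x^2u$) is the one substantive piece of work your writeup still owes. Everything else — the cancellation of the drift, the identification $\tfrac12\int_{|z|\le1}z^2\nu^{\alpha,\beta_j}(dz)\to1$ matching the $\sqrt2$ in \eqref{e2}, and the $O(2-\alpha)$ bound on the large-jump part — is correct and matches the paper.
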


This is our second main result and whose proof will be carried out in Section \ref{Weak Convergence}.

\section{\bf Proof of Theorem 2.1 } \label{proof1}

This section is devoted to the proof of Theorem \ref{thm1}. First, we state an existence and uniqueness result for SDE \eqref{e:0}. Then, by using uniform estimates of the approximating solution  (Lemma \ref{lem2.3}) we prove the tightness of the approximating equation (Lemma \ref{lem2.4}). Finally, by martingale characterization, we show that the limit of the solution  of the approximating equation  is the solution of an SDE driven by a Brownian motion. Now we are ready to state that SDE \eqref{e:0} has a unique strong solution.

\begin{lemma}\label{lem2.1}
If  Assumption \ref{asp1} is satisfied, then SDE \eqref{e:0}
admits a unique strong solution.
\end{lemma}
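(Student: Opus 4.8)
The plan is to obtain existence and uniqueness up to a possible explosion time by a truncation (localization) argument, and then to rule out explosion by a Lyapunov-type a priori estimate; the interplay between the local (rather than global) dissipativity of $b$ and the heavy tails of the stable noise is what makes the second step delicate. A preliminary observation, used throughout, is that Assumption \ref{asp1}(b) upgrades to a global coercivity bound $\langle x, b(x)\rangle \le C(1+|x|^2)$ by a chaining argument: for fixed $x\neq 0$ set $x_k=(k/N)x$ with $N=\lceil |x|/\delta_0\rceil$, so that $|x_{k+1}-x_k|=|x|/N\le\delta_0$ and $|x_k|\le|x|$. Applying the local one-sided Lipschitz inequality to each consecutive pair and summing the telescoping identity $\sum_k\langle x/N,\, b(x_{k+1})-b(x_k)\rangle=\tfrac{1}{N}\langle x,\, b(x)-b(0)\rangle$ yields $\langle x,\, b(x)-b(0)\rangle\le K_b|x|^2$, whence the claimed bound follows from $|b(0)|<\infty$.

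For existence I would truncate the drift, letting $b_n$ agree with $b$ on $\{|x|\le n\}$ and be modified outside so as to be bounded and globally one-sided Lipschitz (e.g.\ by composing $b$ with a smooth radial cut-off). Since $\sigma$ is globally Lipschitz by Assumption \ref{asp1}(a) and $b_n$ is bounded and monotone, the SDE with drift $b_n$ driven by $L^{\alpha,\beta}$ has a unique strong solution $X^{n}$ by the standard theory for L\'evy-driven SDEs with Lipschitz diffusion and bounded monotone drift (Picard iteration for the compensated small-jump martingale part, together with interlacing of the finitely many big jumps, since $\nu^{\alpha,\beta_i}(|z|>1)<\infty$). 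Setting $\tau_n=\inf\{t:|X^n_t|\ge n\}$, local uniqueness gives $X^n=X^m$ on $[0,\tau_n\wedge\tau_m]$, so the $X^n$ paste into a process $X$ defined up to $\tau_\infty=\lim_n\tau_n$. To show $\tau_\infty=\infty$ I would apply It\^o's formula to $V(x)=(1+|x|^2)^{\theta/2}$ with $\theta\in(0,\alpha)$, the range $\theta<\alpha$ chosen precisely so that $\int_{|z|>1}|z|^\theta\,\nu^{\alpha,\beta_i}(dz)<\infty$. Splitting the generator into the drift part $\langle b(x),\nabla V(x)\rangle$ (bounded by $CV(x)$ using the coercivity above), the compensated small-jump part (bounded by $CV(x)\int_{|z|\le1}|z|^2\nu(dz)$ via a second-order Taylor expansion of $V$), and the big-jump part (bounded by $CV(x)\int_{|z|>1}|z|^\theta\nu(dz)$ using $\|\sigma(x)\|_{HS}\le C(1+|x|)$), I expect $\mathcal{L}^{\alpha}V(x)\le C(1+V(x))$. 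Gronwall then gives $\EX[V(X_{t\wedge\tau_n})]\le (V(x)+Ct)e^{Ct}$ uniformly in $n$, and since $|X_{\tau_n}|\ge n$ on $\{\tau_n\le t\}$, Markov's inequality forces $\PX(\tau_n\le t)\le (V(x)+Ct)e^{Ct}(1+n^2)^{-\theta/2}\to0$, so $\tau_\infty=\infty$ a.s.

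For uniqueness I would take two solutions $X,Y$ with $X_0=Y_0=x$, set $Z=X-Y$, and estimate $\EX[|Z_{t\wedge\rho_R}|^2]$, where $\rho_R$ is the first time $|X|,|Y|$ exceed $R$ or $|Z|$ exceeds $\delta_0$. The drift contributes $2\langle Z,\, b(X)-b(Y)\rangle\le 2K_b|Z|^2$ by the local dissipativity, the small-jump quadratic part contributes at most $K_\sigma^2|Z|^2\int_{|z|\le1}|z|^2\nu(dz)$ by Assumption \ref{asp1}(a), and the compensated and drift-free martingale terms vanish in expectation; Gronwall yields $Z\equiv0$ on $[0,\rho_R]$. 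The structural point that makes this survive across jumps is that both solutions are driven by the same noise, so each jump gives $\Delta X-\Delta Y=(\sigma(X_{-})-\sigma(Y_{-}))\,\Delta L^{\alpha,\beta}$; in particular $Z_{t-}=0$ forces $\Delta Z_t=0$, so equal solutions cannot be separated by a jump, and letting $R\to\infty$ gives $X=Y$ globally.

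The step I expect to be the main obstacle is the non-explosion estimate, because $b$ is only locally dissipative and may grow polynomially of arbitrarily high order $r$, so an $L^2$ Lyapunov function is unavailable and the whole argument hinges on the calibration $\theta<\alpha$, which must simultaneously tame the polynomial drift (via the chained global coercivity) and the non-integrable large jumps of the non-symmetric stable measure. The secondary technical point to watch is matching the interlacing of the big jumps with the \emph{local} nature of the monotonicity condition in both the existence and the uniqueness arguments.
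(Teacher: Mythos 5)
The paper does not actually prove this lemma: it defers to the standard truncation method of Xi and Zhu \cite{zhu}, and your proposal is precisely a reconstruction of that strategy (localize, rule out explosion with a Lyapunov function of low enough order, prove pathwise uniqueness using the one-sided Lipschitz drift and Lipschitz $\sigma$), so in approach you and the paper coincide. Your chaining upgrade of the local dissipativity to $\langle x,b(x)\rangle\le C(1+|x|^2)$ is correct and is exactly what makes the drift term in the Lyapunov estimate work (the paper uses the same bound implicitly in its estimate of $I_1$ in Lemma \ref{lem2.3}).

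One step, as literally written, does not go through: in the uniqueness argument you estimate $\EX[|Z_{t\wedge\rho_R}|^2]$ and assert that the martingale terms vanish in expectation, but the large-jump term $\int_{|z|>1}z\,\sigma\,N^{\alpha,\beta}(ds,dz)$ is not compensated, and its contribution to $d|Z|^2$ contains $|\Delta Z|^2\le K_\sigma^2|Z_{-}|^2|\Delta L|^2$, whose compensator involves $\int_{|z|>1}|z|^2\nu^{\alpha,\beta}(dz)=\infty$ for $\alpha<2$. So a single global Gronwall inequality for $\EX|Z_t|^2$ is not available. The fix is the interlacing you mention in passing but do not integrate into the estimate: since $\nu^{\alpha,\beta_i}(|z|>1)<\infty$, there are a.s.\ finitely many large jumps on $[0,T]$; run the $L^2$ Gronwall argument only on the inter-jump intervals (where the equation contains just the drift and the compensated small jumps, for which $\int_{|z|\le1}|z|^2\nu^{\alpha,\beta}(dz)<\infty$), and propagate equality across each large-jump time via $X_{\tau}-Y_{\tau}=(\sigma(X_{\tau-})-\sigma(Y_{\tau-}))\Delta L_{\tau}=0$ whenever $X_{\tau-}=Y_{\tau-}$. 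A similar splitting at a small threshold $\delta$ (rather than at $1$) is also needed in the Hessian bound for the compensated part, exactly as the paper does in its estimate of $I_3$, so that $(1+|x+s\sigma(x)z|^2)^{-1}\le C(1+|x|^2)^{-1}$ holds on the region of integration. With these two adjustments your argument is complete and matches the cited one.
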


The proof of this result is similar to \cite[Corollary 2.9]{zhu}, where a  standard truncated method is  employed to  show that the stochastic differential equation has a unique strong solution, and hence is omitted here. In this article, the following lemma plays a crucial role.

\begin{lemma}\label{lem2.2}
For every  $0<\delta\le 1$, $l \in (1,2)$,
 $\vartheta \in [1,l)$, and $\nu^{\alpha,\beta}$ define as \eqref{e1}, we have
\item{\begin{equation}  \label{eq3.1}
 \sup_{\alpha\geq l} \int_{ |z|\leq \delta} |z|^{2}
  \nu^{\alpha,\beta}(dz)+\sup_{\alpha\geq l}
  \int_{ |z|> \delta} |z|^\vartheta \nu^{\alpha,\beta}(dz) = C<\infty
\end{equation}}
and
\item{\begin{equation} \label{eq3.2}
\lim_{\alpha \rightarrow 2}\int_{ |z|\leq \delta}|z|^2
\nu^{\alpha,\beta}(dz)=\lim_{\alpha \rightarrow 2}\frac{
K_\alpha}{2-\alpha}= 2.
\end{equation}}
For every $\delta>0$ and $ \vartheta <2$, we have
\item{\begin{equation} \label{eq3.3}
\lim_{\alpha \rightarrow 2}\int_{ |z|>\delta}|z|^\vartheta
\nu^{\alpha,\beta}(dz) =0.
\end{equation}}
For every $\alpha$ in $(0,2)$, we have
\item{\begin{equation} \label{eq3.4}
 \lim_{\delta\rightarrow 0+}\delta^{\alpha-2}\int_{ |z|\leq \delta} |z|^{2}
  \nu^{\alpha,\beta}(dz)=c <\infty, \quad \lim_{\delta\rightarrow 0+}\int_{ |z|\leq \delta} |z|^{2}
  \nu^{\alpha,\beta}(dz)=0.
\end{equation}}
Moreover, let $\alpha\in(\frac{3}{2}, 2)$, we have
\item{\begin{equation} \label{eq3.5}
|\int_{|z|\le\delta} |z|^2 \nu^{\alpha,\beta}(dz) - 2| + \int_{|z|>\delta}  |z| \nu^{\alpha,\beta}(dz) \le C(2-\alpha).
\end{equation}}
\end{lemma}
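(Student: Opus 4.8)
The plan is to reduce all five estimates to a single explicit computation of power-law integrals against the density \eqref{e1}, followed by an asymptotic analysis of $K_\alpha$ near $\alpha=2$. First I would record the basic identities. Since the density equals $C_1^i|z|^{-1-\alpha}$ on $\{z>0\}$ and $C_2^i|z|^{-1-\alpha}$ on $\{z<0\}$ with $C_1^i+C_2^i=K_\alpha^i$, integrating the even functions $|z|^2$ and $|z|^\vartheta$ collapses the two one-sided pieces into a single factor $K_\alpha$, so that for $\delta>0$, $\alpha<2$,
\begin{equation*}
\int_{|z|\le\delta}|z|^2\nu^{\alpha,\beta}(dz)=K_\alpha\int_0^\delta z^{1-\alpha}\,dz=\frac{K_\alpha\,\delta^{2-\alpha}}{2-\alpha},
\end{equation*}
and, whenever $\alpha>\vartheta$,
\begin{equation*}
\int_{|z|>\delta}|z|^\vartheta\nu^{\alpha,\beta}(dz)=K_\alpha\int_\delta^\infty z^{\vartheta-1-\alpha}\,dz=\frac{K_\alpha\,\delta^{\vartheta-\alpha}}{\alpha-\vartheta}.
\end{equation*}
The whole lemma is then driven by the function $h(\alpha):=K_\alpha/(2-\alpha)$. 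Writing $(2-\alpha)\Gamma(2-\alpha)=\Gamma(3-\alpha)$ gives $h(\alpha)=\alpha(1-\alpha)/[\Gamma(3-\alpha)\cos(\pi\alpha/2)]$; since $\Gamma(3-\alpha)\to1$, $\alpha(1-\alpha)\to-2$ and $\cos(\pi\alpha/2)\to-1$ as $\alpha\to2$, we get $h(\alpha)\to2$, which together with $\delta^{2-\alpha}\to1$ proves \eqref{eq3.2}.

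For \eqref{eq3.3} I would note that $K_\alpha=(2-\alpha)h(\alpha)\to0$ while the remaining factor $\delta^{\vartheta-\alpha}/(\alpha-\vartheta)$ stays bounded (for $\alpha$ near $2$ one has $\alpha>\vartheta$), so the product tends to $0$. For \eqref{eq3.1} the key observation is that $h$ extends continuously to $[l,2]$ and is therefore bounded there; since $\delta\le1$, the powers $\delta^{2-\alpha}$ and $\delta^{\vartheta-\alpha}$ are controlled by $1$ and $\delta^{\vartheta-2}$ respectively, and $1/(\alpha-\vartheta)\le1/(l-\vartheta)$ because $\alpha\ge l>\vartheta$, so both suprema are finite. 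Estimate \eqref{eq3.4} is immediate once one sees that $\delta^{\alpha-2}\int_{|z|\le\delta}|z|^2\nu^{\alpha,\beta}(dz)=h(\alpha)$ is independent of $\delta$, giving limit $c=h(\alpha)<\infty$, while the plain integral equals $h(\alpha)\,\delta^{2-\alpha}\to0$.

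The only genuinely delicate point is the rate in \eqref{eq3.5}. For the second summand I would use the identity with $\vartheta=1$: on $\alpha\in(3/2,2)$ one has $\alpha-1>1/2$ and $\delta^{1-\alpha}\le\delta^{-1}$, hence $\int_{|z|>\delta}|z|\nu^{\alpha,\beta}(dz)=K_\alpha\delta^{1-\alpha}/(\alpha-1)\le CK_\alpha\le C(2-\alpha)$, the last inequality because $h$ is bounded. For the first summand I would split
\begin{equation*}
\Big|\frac{K_\alpha\,\delta^{2-\alpha}}{2-\alpha}-2\Big|=\big|h(\alpha)(\delta^{2-\alpha}-1)+(h(\alpha)-2)\big|\le|h(\alpha)|\,|\delta^{2-\alpha}-1|+|h(\alpha)-h(2)|,
\end{equation*}
where $|\delta^{2-\alpha}-1|=|e^{(2-\alpha)\log\delta}-1|\le C(2-\alpha)$ since $(2-\alpha)\log\delta$ stays in a bounded interval. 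The main obstacle is the Lipschitz bound $|h(\alpha)-h(2)|\le C(2-\alpha)$: it requires $h$ to be $C^1$ up to $\alpha=2$, which I would justify by observing that $\alpha\mapsto\alpha(1-\alpha)$, $\Gamma(3-\alpha)$ and $\cos(\pi\alpha/2)$ are real-analytic on a neighbourhood of $[3/2,2]$ and that the denominator $\Gamma(3-\alpha)\cos(\pi\alpha/2)$ is bounded away from $0$ there (for $\alpha\in[3/2,2]$ we have $3-\alpha\in[1,3/2]$, so $\Gamma$ is positive and finite, and $\cos(\pi\alpha/2)$ has no zero, with $\cos(\pi)=-1$). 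Then $h$ has bounded derivative on $[3/2,2]$ and the mean value theorem gives the bound; combining the two summands yields \eqref{eq3.5}.
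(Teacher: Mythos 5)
Your proposal is correct and follows essentially the same route as the paper: both reduce every claim to the explicit formulas $\int_{|z|\le\delta}|z|^2\nu^{\alpha,\beta}(dz)=K_\alpha\delta^{2-\alpha}/(2-\alpha)$ and $\int_{|z|>\delta}|z|^\vartheta\nu^{\alpha,\beta}(dz)=K_\alpha\delta^{\vartheta-\alpha}/(\alpha-\vartheta)$, and then analyze $K_\alpha/(2-\alpha)$ near $\alpha=2$ via $\Gamma(3-\alpha)=(2-\alpha)\Gamma(2-\alpha)$. The only cosmetic difference is in \eqref{eq3.5}, where you obtain the Lipschitz bound from the $C^1$-regularity of $h(\alpha)=K_\alpha/(2-\alpha)$ and the mean value theorem, while the paper reaches the same conclusion by applying L'Hospital's rule to the ratio $\big|\int_{|z|\le\delta}|z|^2\nu^{\alpha,\beta}(dz)-2\big|/(2-\alpha)$.
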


We give a proof for this result in Section \ref{Appendix}. Next, we derive a uniform estimate in the following lemma.

\begin{lemma}\label{lem2.3}
If  Assumption \ref{asp1} is satisfied, then
\begin{equation}
    \sup_{\alpha\geq\alpha_0} \EX [\sup_{0\leq t\leq T} |X^{
    \alpha,\beta}_t|^\theta] <\infty. \label{solution0}
\end{equation}
 where  $ \theta \in (0,1)$ and $\alpha_0 \in (\frac{3}{2} , 2)$.
\end{lemma}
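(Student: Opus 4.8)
The plan is to apply the It\^o formula to a smoothed fractional power of the Euclidean norm and to reduce the claim to a pointwise bound on the generator, which can then be closed by Gronwall's inequality. Concretely, I would work with $V(x)=(1+|x|^2)^{\theta/2}$, which is smooth, comparable to $|x|^\theta$ for large $|x|$, and whose derivatives decay: $|\nabla V(x)|\le C(1+|x|^2)^{(\theta-1)/2}$ and $\|D^2V(x)\|\le C(1+|x|^2)^{(\theta-2)/2}$, the latter being globally bounded because $\theta<1$. To make every stochastic integral a genuine martingale I first localize with the stopping times $\tau_R=\inf\{t\ge 0:|X^{\alpha,\beta}_t|>R\}$; on $[0,\tau_R]$ the process is bounded, so after applying the It\^o formula for jump processes and taking expectations the compensated--jump martingale vanishes, leaving
\[
\EX\big[V(X^{\alpha,\beta}_{t\wedge\tau_R})\big]=V(x)+\EX\int_0^{t\wedge\tau_R}\mathcal{L}^{\alpha,\beta}V(X^{\alpha,\beta}_s)\,ds,
\]
where $\mathcal{L}^{\alpha,\beta}V(x)=b(x)\cdot\nabla V(x)+\sum_{j}\int_{\R}\big[V(x+\sigma_j(x)z)-V(x)-\nabla V(x)\cdot\sigma_j(x)z\,\mathbf{1}_{\{|z|\le1\}}\big]\,\nu^{\alpha,\beta_j}(dz)$.

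The heart of the argument is the uniform (in $\alpha\ge\alpha_0$) estimate $\mathcal{L}^{\alpha,\beta}V(x)\le C(1+V(x))$. I would split the jump integral at $|z|=1$. For the small jumps I use the second--order Taylor remainder together with the Hessian decay and the linear growth $|\sigma_j(x)|\le C(1+|x|)$ furnished by Assumption \ref{asp1}(a); integrating $\int_{|z|\le1}|z|^2\,\nu^{\alpha,\beta}(dz)\le C$ from \eqref{eq3.1} then produces a contribution $\le C(1+|x|^2)^{(\theta-2)/2}(1+|x|^2)=C\,V(x)$, uniformly in $\alpha$. For the large jumps I exploit the subadditivity $(a+b)^{\theta/2}\le a^{\theta/2}+b^{\theta/2}$, valid for $\theta\le1$, to get $V(x+\sigma_j(x)z)-V(x)\le C\,V(x)+C|\sigma_j(x)|^\theta|z|^\theta$, and since $\theta<1\le\vartheta$ we have $|z|^\theta\le|z|^\vartheta$ on $|z|>1$, so $\int_{|z|>1}|z|^\theta\,\nu^{\alpha,\beta}(dz)\le\int_{|z|>1}|z|^\vartheta\,\nu^{\alpha,\beta}(dz)\le C$ for a fixed $\vartheta\in[1,\alpha_0)$ by \eqref{eq3.1}; this is exactly where $\alpha_0>1$ is used, to guarantee $\vartheta\ge1$ and the uniform integrability of the large jumps. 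The drift term is handled through Assumption \ref{asp1}: writing $b(x)\cdot\nabla V(x)=\theta(1+|x|^2)^{\theta/2-1}\langle x,b(x)\rangle$, the dissipativity together with the polynomial growth keeps this term of order $V(x)$ once reconciled with the decay of $\nabla V$.

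Given this generator bound, Gronwall applied to $t\mapsto\EX[V(X^{\alpha,\beta}_{t\wedge\tau_R})]$ yields $\EX[V(X^{\alpha,\beta}_{t\wedge\tau_R})]\le(V(x)+CT)e^{CT}$, a bound independent of both $R$ and $\alpha\ge\alpha_0$; letting $R\to\infty$ and invoking Fatou gives $\sup_{\alpha\ge\alpha_0}\sup_{t\le T}\EX[|X^{\alpha,\beta}_t|^\theta]<\infty$. To upgrade this fixed--time estimate to the bound on the supremum demanded by \eqref{solution0}, I would return to the It\^o decomposition, control the supremum of the drift--plus--compensator part by $\int_0^T|\mathcal{L}^{\alpha,\beta}V|\,ds$ (already estimated), and bound the supremum of the martingale part by a Burkholder--Davis--Gundy or Doob inequality for jump martingales, whose bracket is again estimated through the increments of $V$ and the uniform moment bounds of \eqref{eq3.1}.

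The step I expect to be the main obstacle is the pointwise generator estimate, and within it two points deserve care. First, the concavity of $V$ forces a case distinction in the small--jump Taylor bound: the remainder estimate through the Hessian at an intermediate point is only uniform when $\sigma_j(x)z$ is small relative to $|x|$, and the complementary regime must be folded into the $|z|^\theta$ large--jump bound. Second, one must verify that every constant is genuinely free of $\alpha$, which rests entirely on the uniform moment bounds \eqref{eq3.1} of Lemma \ref{lem2.2}. The drift contribution is the other delicate point, since only a local, one--sided control on $b$ is available and its polynomial growth must be balanced against the decay of $\nabla V$.
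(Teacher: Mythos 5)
Your route differs from the paper's in two ways: you take the Lyapunov function $(1+|x|^2)^{\theta/2}$ directly at the fractional power and close the estimate with localization, expectation, and the deterministic Gronwall lemma, whereas the paper applies It\^o's formula to $f(x)=(1+|x|^2)^{1/2}$ (power one), derives the pathwise inequality $f(X^{\alpha,\beta}_t)\le f(x)+C\int_0^t f(X^{\alpha,\beta}_s)\,ds+C+M_t$ with $M$ a local martingale, and then invokes the \emph{stochastic} Gronwall inequality of Xie and Zhang \cite[Lemma 3.8]{SGI}, which converts this one-sided pathwise bound into $\EX[\sup_{t\le T}f(X^{\alpha,\beta}_t)^{1/q}]\le C(q,T)$ for every $q>1$ without ever estimating $\EX\sup_t|M_t|$. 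Your first stage (the generator bound $\mathcal{L}^{\alpha,\beta}V\le C(1+V)$ uniformly in $\alpha\ge\alpha_0$, using \eqref{eq3.1} and the one-sided drift control of Assumption \ref{asp1}) is sound and yields $\sup_{\alpha\ge\alpha_0}\sup_{t\le T}\EX|X^{\alpha,\beta}_t|^\theta<\infty$; note only that the paper's own treatment of the drift also reduces to $\langle x,b(x)\rangle\le C(1+|x|^2)$, which follows from (b) by chaining the local one-sided condition along the segment from $0$ to $x$, not from the polynomial growth bound.

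The genuine gap is in your final step, the upgrade from fixed-time moments to the running supremum in \eqref{solution0}, which you dispatch with ``BDG or Doob.'' For the compensated small-jump martingale the BDG bound at exponent one reads $\EX\sup_t|M_t|\le C\,\EX[M]_T^{1/2}$ with $[M]_T=\sum_i\int_0^T\int_{|z_i|\le1}|V(X_{s-}+\sigma_i z_i)-V(X_{s-})|^2N^{\alpha,\beta_i}(ds,dz_i)$, and since $|V(x+\sigma_iz_i)-V(x)|\le C|z_i|V(x)$ the bracket is of order $\sup_sV(X_s)^2\cdot\int\int|z|^2N$; estimating its square root by Cauchy--Schwarz demands $\EX\sup_sV(X_s)^2$, i.e.\ moments of order $2\theta$ of the supremum, which is exactly what you do not yet have --- the argument is circular as stated. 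It can be repaired by the standard absorption device, writing $[M]_T^{1/2}\le\big(\sup_sV(X_s)\big)^{1/2}\big(\int_0^T\int|z|^2V(X_{s-})N(ds,dz)\big)^{1/2}$ and applying Young's inequality to peel off $\tfrac12\EX\sup_sV(X_s)$ and absorb it into the left-hand side (after first checking, via $\int_{|z|>1}|z|^\theta\nu^{\alpha,\beta}(dz)<\infty$, that the localized supremum has finite expectation despite the jump overshoot at $\tau_R$). You should either carry out this absorption explicitly or, as the paper does, replace the whole step by the stochastic Gronwall lemma, which is precisely the tool designed to avoid it.
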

\begin{proof}  Let $f(x)=(1+|x|^2)^\frac{1}{2}$ and $\delta>0$,
by using It\^{o} formula,  we have
\begin{align}\label{ito formula}
df(X^{\alpha,\beta}_t)&=\quad b(X^{\alpha,\beta}_t)\cdot \nabla f(X^{\alpha,\beta}_t)dt  \nonumber \\
&\quad+\sum_{i=1}^{d}\int_{ |z_i|>\delta}[f(X^{\alpha,\beta}_{t-}+
\sigma_i(X^{\alpha,\beta}_{t-})z_i)-f(X^{\alpha,\beta}_{t-})]
\nu^{\alpha,\beta_i}(dz_i)dt\nonumber \\
&\quad +\sum_{i=1}^d\int_{|z_i|\leq \delta}[f(X^{\alpha,\beta}_{t-}+
\sigma_i(X^{\alpha,\beta}_{t-})z_i)-f(X^{\alpha,\beta}_{t-})\nonumber \\
&\quad\quad\quad\quad\quad\quad\quad- \sigma_i(X^{\alpha,\beta}_{t-})z_i\cdot
\nabla f(X^{\alpha,\beta}_{t-})] \nu^{\alpha,\beta_i}(dz_i)dt\nonumber \\
&\quad-\sum_{i=1}^d\int_{\delta<|z_i|\leq 1}\sigma_i(X^{\alpha,\beta}_{t-})z_i\cdot
\nabla f(X^{\alpha,\beta}_{t-})] \nu^{\alpha,\beta_i}(dz_i)dt\nonumber \\
&\quad \  +\sum_{i=1}^d\int_{ \mathbb{R}-\{0\}}[f(X^{\alpha,\beta}_{t-}+
\sigma_i(X^{\alpha,\beta}_{t-})z_i)-f(X^{\alpha,\beta}_{t-})]
\tilde{N}^{\alpha,\beta_i}(dt,dz_i)\nonumber \\
&:=  I_1(X_t^{\alpha,\beta})dt+I_2(X_t^{\alpha,\beta})dt+I_3(X_t^{\alpha,\beta})dt+I_4(X_t^{\alpha,\beta})+dM_t.
\end{align}
For $I_1(X_t^{\alpha,\beta})$, we have
\begin{equation} \label{I1}
|I_1(X_t^{\alpha,\beta})|\le |b_i(X_t^{\alpha,\beta})\partial^i f(X_t^{\alpha,\beta})|=\frac{|\langle X_t^{\alpha,\beta}, b(X_t^{\alpha,\beta})\rangle|}{(1+|X_t^{\alpha,\beta}|^2)^{\frac{1}{2}}}\le C f(X_t^{\alpha,\beta}).
\end{equation}
We observe that
\begin{align*}
|f(x+y)-f(x)|\le|y|\int_0^1|\nabla f(x+sy)|ds\le \frac{|y|}{2}.
\end{align*}
Using Lemma \ref{lem2.2} and the above inequality, we deduce that
\begin{align}  \label{I2}
|I_2(X_t^{\alpha,\beta})|&\le\sum_{i=1}^{d}\int_{|z_i|>\delta}|f(X_{t-}^{\alpha,\beta} +\sigma_i(X_{t-}^{\alpha,\beta})z_i)-f(X_{t-}^{\alpha,\beta})|\nu^{\alpha,\beta_i}(dz_i) \nonumber \\
&\le C\sum_{i=1}^{d}\int_{|z_i|>\delta}|\sigma_i(X_{t-}^{\alpha,\beta})z_i|\nu^{\alpha,\beta_i}(dz_i)\nonumber \\
&\le C  \sum_{i=1}^{d}\int_{|z_i|>\delta}|z_i|\nu^{\alpha,\beta_i}(dz_i)(|X_t^{\alpha,\beta}|+1)\nonumber \\
&\le C (f(X_t^{\alpha,\beta})+1) .
\end{align}
For $I_3(X_{t}^{\alpha,\beta})$, we can choose a sufficiently small $\delta>0$ such that for all  $|z|\le\delta$,
\begin{align*}
&\quad|f(X^{\alpha,\beta}_{t-}+
\sigma_i(X^{\alpha,\beta}_{t-})z_i)-f(X^{\alpha,\beta}_{t-})
- \sigma_i(X^{\alpha,\beta}_{t-})z_i\cdot
\nabla f(X^{\alpha,\beta}_{t-})|\\
&=|\int_0^1dr \int_0^r  (\sigma_i(X_{t-}^{\alpha,\beta})z_i)H(f)(X^{\alpha,\beta}_{t-}+s\sigma_i(X_{t-}^{\alpha,\beta})z_i) \cdot(\sigma_i(X_{t-}^{\alpha,\beta})z_i)ds|\\
&\le \sup_{s\le1}\frac{C|\sigma_i(X_{t-}^{\alpha,\beta})z_i|^2}{(1+|X_{t-}^{\alpha,\beta} +s\sigma_i(X_{t-}^{\alpha,\beta})z_i|^2)^{\frac{1}{2}}}\\ \nonumber
&\le \frac{C|\sigma_i(X_{t-}^{\alpha,\beta})z_i|^2}{(1+|X_{t-}^{\alpha,\beta}|^2)^{\frac{1}{2}}}.
\end{align*}
In the last step, since
\begin{align*}
&\quad\frac{1}{1+|X_{t-}^{\alpha,\beta} +s\sigma_i(X_{t-}^{\alpha,\beta})z_i|^2}\\
&\le \frac{1}{1+|\frac{X_{t-}^{\alpha,\beta}}{2}+2s\sigma_i(X_{t-}^{\alpha,\beta})z_i|^2+\frac{3|X_{t-}^{\alpha,\beta}|^2}{4}-3|s\sigma_i(X_{t-}^{\alpha,\beta})z_i|^2}\\
&\le \frac{C}{1+|X_{t-}^{\alpha,\beta}|^2}.
\end{align*}
Thus, we have
\begin{align} \label{I3}
&|I_3(X_{t}^{\alpha,\beta})|\le C\sum_{i=1}^d\int_{|z_i|\le \delta} \frac{|\sigma_i(X_{t-}^{\alpha,\beta})z_i|^2}{(1+|X_{t-}^{\alpha,\beta}|^2)^{\frac{1}{2}}}\nu^{\alpha,\beta_i}(dz_i) \nonumber \\
&\le C \sum_{i=1}^d\int_{|z_i|\le \delta} |z_i|^2\nu^{\alpha,\beta_i}(dz_i)(1+|X_{t-}^{\alpha,\beta}|^2)^{\frac{1}{2}}\nonumber \\
&\le C f(X_{t}^{\alpha,\beta}).
\end{align}
For $I_4(X_{t}^{\alpha,\beta})$, we have
\begin{align}\label{I4}
|I_4(X_{t}^{\alpha,\beta})|&\le \sum_{i=1}^d \int_{\delta \le |z_i| < 1} |\sigma_i(X^{\alpha,\beta}_{t-})z_i\cdot
\nabla f(X^{\alpha,\beta}_{t-})| \nu^{\alpha,\beta_i}(dz_i)\nonumber \\
&\le C \sum_{i=1}^d \int_{|z|>\delta} |z_i|f(X^{\alpha,\beta}_{t-}) \nu^{\alpha,\beta_i}(dz_i) \le Cf(X^{\alpha,\beta}_{t-}).
\end{align}
From   \eqref{ito formula}-\eqref{I4}, we get
\begin{equation}
f(X_t^{\alpha,\beta})-f(x_0)\le C\int_0^t f(X_s^{\alpha,\beta})ds+C+M_t. \nonumber
\end{equation}
By using a stochastic Gronwall's inequality in Xie and Zhang \cite[lemma 3.8]{SGI}, for any\ $1<q<\infty$, we obtain
\begin{align*}
\EX \sup_{0\le t \le T}|X_t^{\alpha,\beta}|^\frac{1}{q}
&\le \EX \sup_{0\le t \le T}{f(X_t^{\alpha,\beta})}^\frac{1}{q}\\
&\le C(q,T)<\infty.
\end{align*}
Since the inequality always holds for all $q\in(1,\infty)$, then we have
\begin{equation}\label{solution0} \nonumber
    \sup_{\alpha\geq\alpha_0} \EX [\sup_{0\leq t\leq T} |X^{
    \alpha,\beta}_t|^\theta] <\infty, \nonumber
\end{equation}
where $\theta\in(0,1)$.\qed\end{proof}\bigskip

The result of uniform estimate can be used below to establish the tightness of the approximating solution.

\begin{lemma}\label{lem2.4}
If  Assumption \ref{asp1} is satisfied. The family
$\{X^{\alpha,\beta}\}_{\alpha \geq \alpha_0} $ is tight in the space
$D_{J_1}([0,T],\mathbb{R}^d)$.
\end{lemma}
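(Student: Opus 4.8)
The plan is to establish tightness in $D_{J_1}([0,T],\mathbb{R}^d)$ by verifying Aldous' criterion, which (together with the compact-containment condition) characterizes tightness for càdlàg processes. There are two ingredients. First, the \emph{compact containment} condition: for each $t$ the family $\{X_t^{\alpha,\beta}\}_{\alpha\ge\alpha_0}$ must be stochastically bounded, i.e. $\lim_{N\to\infty}\sup_{\alpha\ge\alpha_0}\PX(|X_t^{\alpha,\beta}|>N)=0$. This follows immediately from the uniform moment estimate \eqref{solution0} of Lemma \ref{lem2.3} via Chebyshev's inequality, since $\sup_{\alpha\ge\alpha_0}\EX[\sup_{0\le t\le T}|X_t^{\alpha,\beta}|^\theta]<\infty$ with $\theta\in(0,1)$. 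Second, the \emph{Aldous condition} on stopping times: for any sequence of $\mathcal{F}_t$-stopping times $\tau_\alpha\le T$ and any deterministic $\rho_\alpha\downarrow 0$, one needs $|X_{\tau_\alpha+\rho_\alpha}^{\alpha,\beta}-X_{\tau_\alpha}^{\alpha,\beta}|\to 0$ in probability, uniformly in $\alpha$.

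To verify the Aldous condition I would work from the integral form of the $i$th component of \eqref{e:0}, splitting the increment over $[\tau_\alpha,\tau_\alpha+\rho_\alpha]$ into the drift term, the small-jump compensated martingale part (over $|z_i|\le\delta$), and the large-jump parts. For the drift I would use Assumption \ref{asp1}(b) (the growth bound $|b(x)|\le C(1+|x|^r)$) together with the moment estimate to control $\EX\big|\int_{\tau_\alpha}^{\tau_\alpha+\rho_\alpha} b(X_s^{\alpha,\beta})\,ds\big|$, which is $O(\rho_\alpha)$. For the compensated small-jump martingale, I would apply the strong Markov property to reduce to an increment starting at a fixed time, then estimate the conditional second moment by the Burkholder–Davis–Gundy / It\^o isometry for Poisson integrals; its predictable quadratic variation involves $\int_{|z_i|\le\delta}|z_i|^2\nu^{\alpha,\beta_i}(dz_i)$, which by \eqref{eq3.1} of Lemma \ref{lem2.2} is \emph{uniformly bounded} in $\alpha\ge\alpha_0$, so this contribution is $O(\rho_\alpha)$ as well. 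The large-jump terms (over $|z_i|>\delta$) are handled by the finite total mass $\int_{|z_i|>\delta}\nu^{\alpha,\beta_i}(dz_i)$ and the bound $\int_{|z_i|>\delta}|z_i|\nu^{\alpha,\beta_i}(dz_i)\le C$ from \eqref{eq3.1}, combined with the linear-growth estimate $|\sigma_i(x)z_i|\le C(1+|x|)|z_i|$; these again yield a bound vanishing with $\rho_\alpha$.

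The main obstacle is the uniformity of all constants in $\alpha$: because the L\'evy measures $\nu^{\alpha,\beta_i}$ degenerate as $\alpha\to 2$ (their small-jump density blows up near the origin), one cannot merely estimate for fixed $\alpha$ and take a supremum afterwards. This is exactly what Lemma \ref{lem2.2} is designed to circumvent---the truncated moment bounds \eqref{eq3.1} are uniform over $\alpha\ge l$, so every step above produces a constant independent of $\alpha$. A secondary technical point is that the moment control is only of fractional order $\theta\in(0,1)$ rather than order two; I would therefore phrase the stopping-time estimate so that it survives under a low-order moment, using Markov's inequality in the form $\PX(|X_{\tau_\alpha+\rho_\alpha}^{\alpha,\beta}-X_{\tau_\alpha}^{\alpha,\beta}|>\eps)\le\eps^{-\theta}\EX|X_{\tau_\alpha+\rho_\alpha}^{\alpha,\beta}-X_{\tau_\alpha}^{\alpha,\beta}|^\theta$ and exploiting subadditivity of $x\mapsto x^\theta$ to split across the terms. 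Assembling the three estimates, letting $\rho_\alpha\downarrow0$, and then choosing $\delta$ suitably completes the verification of Aldous' criterion and hence tightness.
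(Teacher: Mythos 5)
Your plan follows essentially the same route as the paper: Aldous' criterion, with the compact-containment part obtained from Lemma \ref{lem2.3} via Chebyshev, and the stopping-time condition verified by splitting the increment over $[\tau^\alpha,\tau^\alpha+\varepsilon]$ into drift, compensated small-jump and large-jump parts and invoking the uniform bounds \eqref{eq3.1} of Lemma \ref{lem2.2}. The one point to watch is the martingale term: since $\sigma(X_{s-})$ is unbounded and only moments of order $\theta<1$ are available, a direct second-moment/Burkholder estimate does not close by itself; the paper handles this by first intersecting with the event $\{\sup_{0\le t\le T}|X^{\alpha,\beta}_t|\le M\}$ (so that $\sigma$ is effectively bounded) and controlling the complementary probability with the $\theta$-moment --- a localization step that your sketch leaves implicit but which is needed to make your BDG argument rigorous.
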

\begin{proof}
According to Aldous' criterion \cite[Thereom 1.1]{Aldous}, we only  prove the following two items:

\((1)\)  For every $\varepsilon >0$, there exists $L_\varepsilon$ such that
\begin{equation} \label{Aldous1}
    \sup_{\alpha \geq \alpha_0}\PX(\sup_{0\leq t\leq T}|X^{\alpha,\beta}_t| >L_\varepsilon) \leq
\varepsilon.
\end{equation}

\((2)\) For every stopping time $\tau^\alpha \in [0,T]$, and every $K>0$, we
have
\begin{equation}\label{Aldous2}
    \lim_{\varepsilon\rightarrow 0}
\sup_{\alpha \geq
\alpha_0}\PX(|X^{\alpha,\beta}_{T\wedge(\tau^\alpha+\varepsilon)}-X^{\alpha,\beta}_{\tau^\alpha}|
>K) =0.
\end{equation}

By using Chebyshev's inequality, we have
\begin{align}\label{Che}
\sup_{\alpha \geq \alpha_0}\PX(\sup_{0\leq t\leq T}|X^{\alpha,\beta}_t| >L) \leq\frac{1}{L^\theta} \sup_{\alpha>\alpha_0} \EX(\sup_{0\le t\e T}|X_t^{\alpha}|^{\theta})\le\frac{C}{L^\theta},
\end{align}
where $\theta \in (0, 1)$, and $\alpha_0 \in (\frac{3}{2}, 2)$. This  immediately implies the estimate \eqref{Aldous1}.

For notational convenience, we set
$\tau^\alpha+\varepsilon:=T\wedge(\tau^\alpha+\varepsilon)$ and
obtain that
\begin{align*}
 \quad\sup_{\alpha \geq \alpha_0}\PX(|X^{\alpha,\beta}_{\tau^\alpha+\varepsilon}-X^{\alpha,\beta}_{\tau^\alpha}|^2
>K)& \leq  \sup_{\alpha \geq
\alpha_0}\PX(|\int_{\tau^\alpha}^{\tau^\alpha+\varepsilon}
b(X^{\alpha,\beta}_s)ds|>\frac{K}{3})\\
&\quad+ \sup_{\alpha \geq
\alpha_0}\PX(|\int_{\tau^\alpha}^{\tau^\alpha+\varepsilon}
\int_{ |z|\leq 1} z\sigma(X^{\alpha,\beta}_{s-}) \tilde{N}^{\alpha,\beta}(ds,dz)|>\frac{K}{3})\\
&\quad+ \sup_{\alpha \geq
\alpha_0}\PX(|\int_{\tau^\alpha}^{\tau^\alpha+\varepsilon}
 \int_{ |z|>1} z\sigma(X^{\alpha,\beta}_{s-}) N^{\alpha,\beta}(ds,dz)|>\frac{K}{3})\\
 &:= K_1+K_2+K_3.
\end{align*}
Let $\ p \in(0,1)$, for $K_1$, by using inequality $(a+b)^p \le (a^p + b^p)$ and Lemma \ref{lem2.3} we have
\begin{align*}
   K_1
&\leq \sup_{\alpha \geq
\alpha_0}\PX(|\int_{\tau^\alpha}^{\tau^\alpha+\varepsilon}
C(1+|X^{\alpha,\beta}_s|^r)ds|>\frac{K}{3}) \\
&\leq \sup_{\alpha \geq
\alpha_0}\frac{C}{K^{\frac{1}{r+1}}}\EX|\int_{\tau^\alpha}^{\tau^\alpha+\varepsilon}
(1+|X^{\alpha,\beta}_s|^r)ds|^{\frac{1}{r+1}} \\
&\leq\frac{C\varepsilon^{\frac{1}{r+1}}}{K^{\frac{1}{r +1}}} \sup_{\alpha \geq
\alpha_0} \EX (1+ \sup_{0<s \leq T}|X^{\alpha,\beta}_s|^{\frac{r}{r +1}})\rightarrow 0,
\quad \text{as} \quad \varepsilon \rightarrow 0.
\end{align*}
By Burkholder's
inequality, we obtain that
\begin{align*}
   K_2
&=\sup_{\alpha \geq
\alpha_0}\PX(|\int_{\tau^\alpha}^{\tau^\alpha+\varepsilon}
 \int_{ |z|\leq 1} z\sigma( X^{\alpha,\beta}_{s-}) \tilde{N}^{\alpha,\beta}(ds,dz)|>\frac{K}{3})\\
&\leq \sup_{\alpha \geq
\alpha_0}\PX(|\int_{\tau^\alpha}^{\tau^\alpha+\varepsilon}
 \int_{ |z|\leq 1} z\sigma( X^{\alpha,\beta}_{s-}) \tilde{N}^{\alpha,\beta}(ds,dz)|>\frac{K}{3}; \sup_{0\leq t\leq T}|X^{\alpha,\beta}_t|\leq M)\\
 &\quad+  \PX(\sup_{0\leq t\leq T}|X^{\alpha,\beta}_t|\geq M)\\
&\leq \frac{3}{K} \sup_{\alpha \geq
\alpha_0}\EX|(\int_{\tau^\alpha}^{\tau^\alpha+\varepsilon} \int_{
|z|\leq
   1} z\sigma(X^{\alpha,\beta}_{s-}) \tilde{N}^{\alpha,\beta}(ds,dz); \sup_{0\leq t\leq T}|X^{\alpha,\beta}_t|\leq
   M)| \\
  &\quad +  \PX(\sup_{0\leq t\leq T}|X^{\alpha,\beta}_t|\geq M)\\
   &\leq \frac{C}{K} \sup_{\alpha \geq \alpha_0}[\EX
  \int_{\tau^\alpha}^{\tau^\alpha+\varepsilon} \int_{ |z|\leq 1} |z|^2  \nu^{\alpha,\beta}(dz)ds]^{\frac{1}{2}}+  \PX(\sup_{0\leq t\leq T}|X^{\alpha,\beta}_t|\geq M)\\
&\leq\frac{C\sqrt{\varepsilon}}{K} \sup_{\alpha \geq
\alpha_0}[\int_{ |z|\leq 1}
  |z|^2 \nu^{\alpha,\beta}(dz)]^{\frac{1}{2}}+\frac{1}{M^\theta}\sup_{\alpha \geq \alpha_0}
  \EX \sup_{0\leq t\leq T}|X^{\alpha,\beta}_t|^\theta.
\end{align*}
By choosing  sufficient large positive number $M$ in the last line of the above chain of inequalities so that
\[
\frac{1}{M^\theta}\sup\limits_{\alpha \geq \alpha_0} \EX \sup\limits_{0\leq t\leq
  T}|X^{\alpha,\beta}_t|^\theta
\]
is sufficient small.  Then by the fact that \  $\sup\limits_{\alpha \geq
\alpha_0}\int_{ |z|\leq 1}|z|^2 \nu^{\alpha,\beta}(dz)<\infty$\ , we deduce
that
\[
K_2 \rightarrow 0, \quad \text{as} \quad \varepsilon \rightarrow 0.
\]
  For $K_3$, we  have
\begin{align*}
   K_3
&=\sup_{\alpha \geq
\alpha_0}\PX(|\int_{\tau^\alpha}^{\tau^\alpha+\varepsilon}
 \int_{ |z|>1} z\sigma( X^{\alpha,\beta}_{s-}) N^{\alpha,\beta}(ds,dz)|>\frac{K}{3})\\
&\leq \sup_{\alpha \geq
\alpha_0}\PX(|\int_{\tau^\alpha}^{\tau^\alpha+\varepsilon}
 \int_{ |z|>1} z\sigma( X^{\alpha,\beta}_{s-}) {N}^{\alpha,\beta}(ds,dz)|>\frac{K}{3}; \sup_{0\leq t\leq T}|X^{\alpha,\beta}_t|\leq M) +  \PX(\sup_{0\leq t\leq T}|X^{\alpha,\beta}_t|\geq M)\\
&\leq \frac{C}{K} \sup_{\alpha
  \geq \alpha_0}\EX( \int_{\tau^\alpha}^{\tau^\alpha+\varepsilon} \int_{ |z|>
  1} |z| {N}^{\alpha,\beta}(ds,dz) )+  \PX(\sup_{0\leq t\leq T}|X^{\alpha,\beta}_t|\geq M)\\
&= \frac{C}{K} \sup_{\alpha
  \geq \alpha_0}\EX( \int_{\tau^\alpha}^{\tau^\alpha+\varepsilon} \int_{ |z|>
  1} |z| \nu^{\alpha,\beta}(dz)ds)+  \PX(\sup_{0\leq t\leq T}|X^{\alpha,\beta}_t|\geq M)\\
& \leq \frac{ C\varepsilon}{K} \sup_{\alpha \geq \alpha_0}
\int_{|z|\geq
  1}|z|\nu^{\alpha,\beta}(dz)+\frac{1}{M^\theta}\sup_{\alpha \geq \alpha_0} \EX \sup_{0\leq t\leq
  T}|X^{\alpha,\beta}_t|^\theta.
  \end{align*}
In the same way, by choosing  sufficient large positive number $M>0$ in the last line of the above chain of inequalities so that
\[
\frac{1}{M^\theta}\sup_{\alpha \geq \alpha_0} \EX \sup_{0\leq t\leq
  T}|X^{\alpha,\beta}_t|^\theta
\]
is sufficient small.  Then by the fact that  $ \sup\limits_{\alpha \geq
\alpha_0}\int_{|z|\geq1}|z|\nu^{\alpha,\beta}(dz) <\infty$, we deduce that
\[
K_3 \rightarrow 0, \quad \text{as} \quad \varepsilon \rightarrow 0.
\]
Combing above arguments for $K_1,K_2$ and $K_3$,  we get
\eqref{Aldous2}. Then, we can deduce that
the family $\{X^{\alpha,\beta}\}_{\alpha \geq \alpha_0}$ is tight in the
space $D_{J_1}([0,T],\mathbb{R}^d)$. \qed\end{proof}\bigskip

The following lemma characterizes the limit of $\mathcal{L}(X^\alpha)$, as the law of $X^\alpha$.

 \begin{lemma}\label{lem2.5}
If Assumption  \ref{asp1} is satisfied. Let
$\mu^{\alpha,\beta} :=\mathcal{L}(X^\alpha)$  and $\mu^*$ is the weak limit of any convergent subsequence
$\{\mu^{\alpha_n,\beta}\}$ $(\alpha_n \rightarrow 2$ as $n \rightarrow
\infty)$. Then $\mu^*$ is supported by $C_U([0,T],\mathbb{R}^d)$.
\end{lemma}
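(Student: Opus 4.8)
The plan is to show that the maximal jump of the approximating process $X^{\alpha,\beta}$ tends to zero in probability as $\alpha\to 2$, and then to transfer this to the weak limit $\mu^*$. Write $w(x):=\sup_{0\le t\le T}|x(t)-x(t-)|$ for the maximal-jump functional on $D([0,T],\mathbb{R}^d)$, so that the continuous paths are exactly $\{w=0\}$. On the set of continuous paths the $J_1$-topology coincides with the uniform topology, so $\{w=0\}$ is closed in $D_{J_1}$ and $w$ is lower semicontinuous there: a jump of size $c$ in a $J_1$-limit path must be approximated by genuine jumps of the converging paths (in $J_1$ jumps can neither be created in the limit nor merged away), whence $\liminf w(x_n)\ge w(x)$. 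Thus $\{w>\varepsilon\}$ is open for each $\varepsilon>0$, and the Portmanteau theorem applied along $\mu^{\alpha_n,\beta}\Rightarrow\mu^*$ gives
\[
\mu^*(\{w>\varepsilon\})\le \liminf_{n\to\infty}\mu^{\alpha_n,\beta}(\{w>\varepsilon\})=\liminf_{n\to\infty}\PX\big(w(X^{\alpha_n,\beta})>\varepsilon\big).
\]
Hence it suffices to prove $\PX(w(X^{\alpha,\beta})>\varepsilon)\to 0$ as $\alpha\to 2$ for every $\varepsilon>0$; letting $\varepsilon=1/k\downarrow 0$ then yields $\mu^*(\{w>0\})=0$, i.e. $\mu^*$ is carried by the continuous paths, which with the uniform topology is exactly $C_U([0,T],\mathbb{R}^d)$.

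For the probabilistic estimate I would localize on $A_M:=\{\sup_{0\le t\le T}|X^{\alpha,\beta}_t|\le M\}$. By Lemma \ref{lem2.3} and Chebyshev's inequality, $\PX(A_M^c)\le C M^{-\theta}$ uniformly in $\alpha\ge\alpha_0$. On $A_M$ the diffusion coefficient is bounded, since Assumption \ref{asp1}(a) gives $\|\sigma(x)\|_{HS}\le \|\sigma(0)\|_{HS}+K_\sigma|x|\le C_M$ for $|x|\le M$. Because the components of the cylindrical noise $L^{\alpha,\beta}$ are independent one-dimensional stable processes, almost surely at most one component jumps at any time, so every jump of $X^{\alpha,\beta}$ has the form $\Delta X^{\alpha,\beta}_t=\sigma_i(X^{\alpha,\beta}_{t-})\,\Delta L^{\alpha,\beta_i}_t$ for some $i$; consequently on $A_M$ a jump with $|\Delta X^{\alpha,\beta}_t|>\varepsilon$ forces $|\Delta L^{\alpha,\beta_i}_t|>\varepsilon/C_M=:\delta_*$ for some $i$. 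Since the number of jumps of $L^{\alpha,\beta_i}$ exceeding $\delta_*$ on $[0,T]$ is Poisson with parameter $T\int_{|z|>\delta_*}\nu^{\alpha,\beta_i}(dz)$, a union bound over $i$ and $1-e^{-\lambda}\le\lambda$ give
\[
\PX\big(w(X^{\alpha,\beta})>\varepsilon\big)\le \PX(A_M^c)+\sum_{i=1}^d T\int_{|z|>\delta_*}\nu^{\alpha,\beta_i}(dz)\le \frac{C}{M^\theta}+\frac{T}{\delta_*}\sum_{i=1}^d\int_{|z|>\delta_*}|z|\,\nu^{\alpha,\beta_i}(dz).
\]
By Lemma \ref{lem2.2}, estimate \eqref{eq3.3} with $\vartheta=1$, the last sum tends to $0$ as $\alpha\to 2$ with $M$ (hence $\delta_*$) fixed. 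Therefore $\limsup_{\alpha\to 2}\PX(w(X^{\alpha,\beta})>\varepsilon)\le C M^{-\theta}$, and letting $M\to\infty$ gives $\PX(w(X^{\alpha,\beta})>\varepsilon)\to 0$, as needed.

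The main obstacle is the topological transfer in the first paragraph: one must justify that vanishing of the maximal jump in probability for the prelimit processes forces continuity of the limit law, which rests on the lower semicontinuity of $w$ on $D_{J_1}$ (equivalently, that $\{w=0\}$ is closed and carries the uniform topology). Alternatively, this step may be replaced by directly invoking the standard criterion for continuity of a weak limit in the Skorohod space (e.g. Jacod--Shiryaev or Billingsley \cite{Bill}), which states precisely that if $X^{\alpha_n,\beta}\Rightarrow\mu^*$ and $\sup_{t\le T}|\Delta X^{\alpha_n,\beta}_t|\to 0$ in probability, then $\mu^*$ is supported on continuous paths; the estimate above is exactly the hypothesis of that criterion. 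The remaining ingredients---the Poisson bound on large jumps and the single-component jump structure of the cylindrical noise---are routine given Lemmas \ref{lem2.2} and \ref{lem2.3}.
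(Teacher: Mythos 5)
Your proof is correct and follows essentially the same route as the paper's: localize on $\{\sup_{0\le t\le T}|X^{\alpha,\beta}_t|\le M\}$ via the uniform moment bound of Lemma \ref{lem2.3}, bound the probability of a jump of size $>\varepsilon$ by a constant times $\int_{|z|>\delta}|z|\,\nu^{\alpha,\beta}(dz)$, and let this vanish as $\alpha\to 2$ using \eqref{eq3.3}. The only difference is that you make explicit the final topological transfer (lower semicontinuity of the maximal-jump functional and the Portmanteau theorem, or equivalently the standard continuity criterion for weak limits in $D_{J_1}$), a step the paper leaves implicit.
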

\begin{proof}
For any $\eta>0$ and $M>0$,  we have for any $ \theta$ in
$(0,1)$ that,
\begin{align*}
    \PX(\sup_{0\leq t\leq T}|X^{\alpha,\beta}_t-X^{\alpha,\beta}_{t-}| >\eta)
& \leq   \PX(\sup_{0\leq t\leq T}|X^{\alpha,\beta}_t-X^{\alpha,\beta}_{t-}| >\eta;
  \sup_{0\leq t\leq T}|X^{\alpha,\beta}_t|\leq M)\\
& \quad +  \PX(\sup_{0\leq t\leq T}|X^{\alpha,\beta}_t|\geq M)\\
&\leq\frac{c}{\eta}\EX \int_0^T \int_{|z|\geq \eta}|z| N^{\alpha,\beta}(ds,dz)+\frac{1}{M^\theta} \EX \sup_{0\leq t\leq T}|X^{\alpha,\beta}_t|^\theta\\
& \leq \frac{ c}{\eta}\int_{|z|\geq
\eta}|z|\nu^{\alpha,\beta}(dz)+\frac{1}{M^\theta} \EX \sup_{0\leq t\leq
T}|X^{\alpha,\beta}_t|^\theta.
\end{align*}
Letting first $M\rightarrow \infty$ and then $\alpha \rightarrow 2$,
we obtain that
\[
\lim_{\alpha \rightarrow 2}\PX(\sup_{0\leq t\leq
T}|X^{\alpha,\beta}_t-X^{\alpha,\beta}_{t-}|
>\eta) =0.
\]
Then $\mu^*$ is supported by
 $C_U([0,T],\mathbb{R}^d)$.
 \qed\end{proof}\bigskip

~\\

By using It\^{o}'s formula for the solution $X^{\alpha,\beta}_t$ of SDE \eqref{e:0}, for each $f(x)\in C^2(\mathbb{R}^d)$, we have
\begin{equation}\label{M1}
\begin{aligned}
&\quad \  f(X^{\alpha,\beta}_t)-f(x)\nonumber\\
&=\sum_{i=1}^{d}\int_0^t b_i(X^{\alpha,\beta}_s)\cdot \partial_i f(X^{\alpha,\beta}_s)ds\nonumber\\
&\quad+\sum_{i=1}^{d}\int_0^t \int_{ |z_i|\leq 1}[f(X^{\alpha,\beta}_{s-}+
\sigma_i(X^{\alpha,\beta}_{s-})z_i)-f(X^{\alpha,\beta}_{s-})]
\tilde{N}^{\alpha,\beta_i}(ds,dz_i)\nonumber\\
&\quad+\sum_{i=1}^{d}\int_0^t \int_{ |z_i|>1}[f(X^{\alpha,\beta}_{s-}+
\sigma_i(X^{\alpha,\beta}_{s-})z_i)-f(X^{\alpha,\beta}_{s-})]
N^{\alpha,\beta_i}(ds,dz_i)\nonumber\\
&\quad+\sum_{i=1}^{d}\int_0^t \int_{|z_i|\leq 1}[f(X^{\alpha,\beta}_{s-}+
\sigma_i(X^{\alpha,\beta}_{s-})z_i)-f(X^{\alpha,\beta}_{s-})- \sigma_i(X^{\alpha,\beta}_{s-})z_i\cdot
\nabla f(X^{\alpha,\beta}_{s-})] \nu^{\alpha,\beta_i}(dz_i)ds\nonumber\\
&=\tilde{M}_t+\mathcal{L}_{b,\sigma}^{\alpha,\beta} f(X_{s-}^{\alpha,\beta}),
\end{aligned}
\end{equation}
where  $\tilde{M}_t$  is a martingale and
$\mathcal{L}_{b,\sigma}^{\alpha,\beta} f(u)$ is defined as following:
\begin{align}\label{Lf-alpha}
\mathcal{L}_{b,\sigma}^{\alpha,\beta} f(u)
&:= b_i(u)\cdot \partial^i f(u)+\sum_{i=1}^{d}\int_{ |z_i|\leq 1}[f(u+
\sigma_i(u)z_i)-f(u)-\sigma_i(u)z_i\cdot \nabla f(u)]\nu^{\alpha,\beta_i}(dz_i)\\
&\quad+\sum_{i=1}^{d} \int_{ |z_i|>1}[f(u+
\sigma_i(u)z_i)-f(u)]\nu^{\alpha,\beta_i}(dz_i).\nonumber
\end{align}
Moreover, let
\begin{equation}\label{Lf}
    \mathcal{L}_{b,\sigma}f(u):=b_i(u)\cdot \partial^i f(u)+\text{Tr}\left( \sigma \sigma^*
    H(f)\right)(u),
\end{equation}
where  $\text{Tr}\left(
\sigma \sigma^* H(f)\right):= \sum_{i,j}(\sigma
\sigma^*)_{ij}\frac{\partial^2 f}{\partial x_i \partial x_j}$.
\begin{lemma}\label{lem2.6}
Let $\mathcal{L}_{b,\sigma}^{\alpha,\beta}$ and $ \mathcal{L}_{b,\sigma}$ be
the operators defined in \eqref{Lf-alpha} and \eqref{Lf},  respectively. Suppose
Assumption \ref{asp1} is verified, then for every $f\in
C^2_b (\R^d)$, we have $$\mathcal{L}_{b,\sigma}^{\alpha,\beta} f \rightarrow
\mathcal{L}_{b,\sigma} f$$ locally uniformly in $\mathbb{R}^d$ as
$\alpha$ tends to $2$.
\end{lemma}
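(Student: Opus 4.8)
The plan is to use that the first-order drift terms $b_i(u)\cdot\partial^i f(u)$ appear identically in $\mathcal{L}_{b,\sigma}^{\alpha,\beta}f$ and $\mathcal{L}_{b,\sigma}f$, so they cancel and it suffices to show that the jump part of $\mathcal{L}_{b,\sigma}^{\alpha,\beta}f$ converges locally uniformly to $\text{Tr}(\sigma\sigma^{*}H(f))$. Fix a compact set $\mathcal{K}\subset\mathbb{R}^d$. On $\mathcal{K}$ the functions $\sigma$, $\nabla f$ and $H(f)$ are bounded, and since $\sigma$ has at most linear growth (Assumption \ref{asp1}), all points $u+s\sigma_i(u)z_i$ with $u\in\mathcal{K}$, $|z_i|\le 1$, $s\in[0,1]$ remain in a fixed compact set $\mathcal{K}'\supset\mathcal{K}$, on which $H(f)$ is uniformly continuous with some modulus $\omega$. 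I treat the small-jump and large-jump integrals separately.

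For the small jumps I Taylor-expand with integral remainder, writing for each $i$
\[
f(u+\sigma_i(u)z_i)-f(u)-\sigma_i(u)z_i\cdot\nabla f(u)=\frac12\,z_i^2\,\sigma_i(u)^{*}H(f)(u)\sigma_i(u)+\rho_i(u,z_i),
\]
where
\[
\rho_i(u,z_i)=\int_0^1(1-s)\,z_i^2\,\sigma_i(u)^{*}\big[H(f)(u+s\sigma_i(u)z_i)-H(f)(u)\big]\sigma_i(u)\,ds.
\]
Integrating the leading term against $\nu^{\alpha,\beta_i}$ over $\{|z_i|\le1\}$ and invoking \eqref{eq3.2} (which gives $\int_{|z_i|\le1}z_i^2\,\nu^{\alpha,\beta_i}(dz_i)\to2$) yields, uniformly for $u\in\mathcal{K}$ since the prefactor is bounded there, the limit $\sigma_i(u)^{*}H(f)(u)\sigma_i(u)$. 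Summing over $i$ and using $\sum_i\sigma_i\sigma_i^{*}=\sigma\sigma^{*}$,
\[
\sum_{i=1}^{d}\sigma_i(u)^{*}H(f)(u)\sigma_i(u)=\text{Tr}\Big(\big(\textstyle\sum_{i}\sigma_i\sigma_i^{*}\big)H(f)\Big)(u)=\text{Tr}\big(\sigma\sigma^{*}H(f)\big)(u),
\]
which is exactly the target.

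The crux is showing that the remainder $\int_{|z_i|\le1}\rho_i(u,z_i)\,\nu^{\alpha,\beta_i}(dz_i)\to0$ locally uniformly, and the \emph{main obstacle} is getting this uniformly in both $u$ and $\alpha$. I split the region into $\{|z_i|\le\delta\}$ and $\{\delta<|z_i|\le1\}$. On the inner region, $|\rho_i(u,z_i)|\le \frac12 z_i^2|\sigma_i(u)|^2\,\omega(C\delta)$ with $C=\|\sigma\|_{\infty,\mathcal{K}}$, so the integral is bounded by $C\,\omega(C\delta)\int_{|z_i|\le\delta}z_i^2\,\nu^{\alpha,\beta_i}(dz_i)\le C\,\omega(C\delta)$, uniformly in $\alpha$ by the uniform bound \eqref{eq3.1}. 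On the annulus, $|\rho_i|\le C z_i^2$ with $C$ uniform on $\mathcal{K}'$, and $\int_{\delta<|z_i|\le1}z_i^2\,\nu^{\alpha,\beta_i}(dz_i)\to0$ as $\alpha\to2$ for fixed $\delta$ (apply \eqref{eq3.2} at radii $\delta$ and $1$, both limits being $2$). The standard $\varepsilon/2$ argument then closes it: first choose $\delta$ small so that the inner piece is $<\varepsilon/2$ uniformly in $\alpha$ and $u$, and only afterwards let $\alpha\to2$ to kill the annulus piece; this ordering (first $\delta$, then $\alpha$) is the delicate point, since the near-origin bound cannot be made small by sending $\alpha\to2$ alone.

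Finally, for the large jumps, since $|f(u+\sigma_i(u)z_i)-f(u)|\le \|\nabla f\|_\infty|\sigma_i(u)z_i|$, the contribution is bounded by $\|\nabla f\|_\infty\|\sigma\|_{\infty,\mathcal{K}}\int_{|z_i|>1}|z_i|\,\nu^{\alpha,\beta_i}(dz_i)$, which tends to $0$ as $\alpha\to2$ by \eqref{eq3.3} with $\vartheta=1$, uniformly for $u\in\mathcal{K}$. Note the non-symmetry ($\beta_i\neq0$) does not contribute a spurious first-order drift in the limit: the odd part of $\nu^{\alpha,\beta_i}$ only enters the first moment, which has been removed by the compensation $-\sigma_i(u)z_i\cdot\nabla f$, while $\int z_i^2\,\nu^{\alpha,\beta_i}$ depends on $C_1^i+C_2^i=K_\alpha^i$ and is insensitive to $\beta_i$. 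Combining the three pieces shows $\mathcal{L}_{b,\sigma}^{\alpha,\beta}f\to\mathcal{L}_{b,\sigma}f$ locally uniformly on $\mathcal{K}$, and since $\mathcal{K}$ was arbitrary this proves the claim.
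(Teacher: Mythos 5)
Your proof is correct and follows essentially the same route as the paper's: cancel the drift, Taylor-expand the compensated small-jump integral so the frozen-Hessian term produces $\mathrm{Tr}(\sigma\sigma^{*}H(f))$ via the limit $\int_{|z|\le\delta}|z|^2\nu^{\alpha,\beta}(dz)\to 2$, control the remainder by choosing $\delta$ small uniformly in $\alpha$ (using the uniform bound on the second moment) before letting $\alpha\to 2$, and kill the large-jump and compensator pieces with $\int_{|z|>\delta}|z|\nu^{\alpha,\beta}(dz)\to 0$. The only difference is bookkeeping (you split the Taylor remainder at $\delta$ rather than re-cutting the integration domain and tracking the drift-correction term $\mathcal{C}$ separately), and you spell out the local uniformity in $u$ and the order of quantifiers more explicitly than the paper does.
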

\begin{proof}
For every $0<\delta<1$, we have
\begin{align}\label{g1}
\mathcal{L}_{b,\sigma}^{\alpha,\beta}
f(u)-\mathcal{L}_{b,\sigma}f(u)&=\sum_{i=1}^{d}\int_{ |z_i|\leq 1}[f(u+
\sigma_i(u)z_i)-f(u)-\sigma_i(u)z_i\cdot \nabla f(u)]\nu^{\alpha,\beta_i}(dz_i)\nonumber\\
&\quad+\sum_{i=1}^{d} \int_{ |z_i|>1}[f(u+
\sigma_i(u)z_i)-f(u)]\nu^{\alpha,\beta_i}(dz_i)-\text{Tr}\left( \sigma \sigma^*
    H(f)\right)(u)\nonumber\\
&=\sum_{i=1}^{d}\int_{ |z_i|\leq \delta}[f(u+
\sigma_i(u)z_i)-f(u)-\sigma_i(u)z_i\cdot \nabla f(u)]\nu^{\alpha,\beta_i}(dz_i)\nonumber\\
&\quad+\sum_{i=1}^{d} \int_{ |z_i|>\delta}[f(u+
\sigma_i(u)z_i)-f(u)]\nu^{\alpha,\beta_i}(dz_i)\nonumber \\
&\quad-\sum_{i=1}^{d}\int_{\delta < |z_i| \leq 1}\sigma_i(u)z_i\cdot\nabla f(u)\nu^{\alpha,\beta_i}(dz_i)
-\text{Tr}\left( \sigma \sigma^*H(f)\right)(u).\nonumber
\end{align}
For convenience, we denote
\begin{align}\nonumber
\mathcal{A}f(u)&:=\sum_{i=1}^{d}\int_{ |z_i|\leq \delta}[f(u+
\sigma_i(u)z_i)-f(u)-\sigma_i(u)z_i\cdot \nabla f(u)]\nu^{\alpha,\beta_i}(dz_i),\nonumber\\
\mathcal{B}f(u)&:=\sum_{i=1}^{d} \int_{ |z_i|>\delta}[f(u+
\sigma_i(u)z_i)-f(u)]\nu^{\alpha,\beta_i}(dz_i),\nonumber \\
\mathcal{C}f(u)&:=\sum_{i=1}^{d}\int_{\delta < |z_i| \leq 1}\sigma_i(u)z_i\cdot\nabla f(u)\  \nu^{\alpha,\beta_i}(dz_i), \nonumber\\
\mathcal{D}f(u)&:=\sum_{i=1}^{d}\int_0^1dt \int_0^t \int_{ |z_i|\leq
\delta}[\sigma_i(u)z_iH(f)(u)\cdot
  \sigma_i(u)z_i] \nu^{\alpha,\beta_i}(dz_i)ds.\nonumber
\end{align}
 By Taylor's formula, we get
$$
f(u+v)-f(u)-v \cdot \nabla f(u)=\int_0^1dt \int_0^t vH(f)(u+sv)\cdot
vds.
$$
Then, we can choose suitable $\delta<< 1$ such that
\begin{align}\nonumber
&\quad \mathcal{A}f(u)-\mathcal{D}f(u)\\
&=\sum_{i=1}^{d}
\int_{ |z_i|\leq \delta}\int_0^1dt \int_0^t
  \sigma_i(u)z_i[H(f)(u+\sigma_i(u)z_is)-H(f)(u)]\cdot \sigma_i(u)z_ids\nu^{\alpha,\beta_i}(dz_i)\nonumber
\end{align}
 is sufficient small. For a fixed $\delta>0$, we have
\begin{align*}
|\int_{|z_i|\le \delta}|z_i|^2 \nu^{\alpha,\beta_i}(dz_i)-2|\rightarrow 0, \ \ \  as \ \ \alpha\rightarrow 2.
\end{align*}
 Then
 \begin{center}
 \begin{align}\nonumber
&\quad \mathcal{D}f(u)-\text{Tr}\left( \sigma \sigma^*H(f)\right)(u)\nonumber\\
  &=\sum_{j,k=1}^d \sum_{i=1}^d\sigma_{ki}\sigma_{ji}f_{kj}(u)\int_0^1dt\int_0^tds(\int_{|z_i|\le \delta}|z_i|^2 \nu^{\alpha,\beta_i}(dz_i))-\sum_{j,k=1}^d \sum_{i=1}^d\sigma_{ki}\sigma_{ji}f_{kj}(u)\int_0^1dt\int_0^t 2ds\nonumber\\
  &=\sum_{j,k=1}^d \sum_{i=1}^d\sigma_{ki}\sigma_{ji}f_{kj}(u)\int_0^1dt\int_0^t ds(\int_{|z_i|\le \delta}|z_i|^2 \nu^{\alpha,\beta_i}(dz_i)-2)\rightarrow 0, \ \ \ as \ \ \alpha\rightarrow 2.\nonumber
 \end{align}
 \end{center}
Similarly, it can be shown that
 \begin{align}\nonumber
 |\mathcal{B}f(u)|\le C \sum_{i=1}^d \int_{|z_i|>\delta}|z_i| \nu^{\alpha,\beta_i}(dz_i)\rightarrow 0,
 \end{align}
 and
\begin{equation} \nonumber
|\mathcal{C}f(u)|
\le C\sum_{i=1}^{d}\int_{\delta<|z|\leq1} |z_i| \nu^{\alpha,\beta_i}(dz_i)
\le C\sum_{i=1}^{d}\int_{|z_i|>\delta}|z_i| \nu^{\alpha,\beta_i}(dz_i)\rightarrow 0,
\end{equation}
 as $\alpha$ tends to 2.
Then, we have
\begin{align}\nonumber
&\quad\mathcal{L}_{b,\sigma}^{\alpha,\beta}
f(u)-\mathcal{L}_{b,\sigma}f(u)\\\nonumber
&=\mathcal{A}f(u)+\mathcal{B}f(u)-\mathcal{C}f(u)-\text{Tr}\left( \sigma \sigma^*H(f)\right)(u)\nonumber\\
&=\mathcal{A}f(u)-\mathcal{D}f(u)+\mathcal{D}f(u)-\text{Tr}\left( \sigma \sigma^*H(f)\right)(u)+\mathcal{B}f(u)-\mathcal{C}f(u) \rightarrow 0,\nonumber
\end{align}
as $\alpha$ tends to 2.
\qed\end{proof}\bigskip

Now, by using the idea of Cerrai (see \cite[Theorem 6.2]{Cerrai}) and all of the lemmas presented in this section, we can prove Theorem \ref{thm1} as follows.
 ~\\

 \textbf{\emph{Proof of Theorem 2.1.}}\ \ \
Due to sequence $\{\mu^{\alpha_n,\beta}\}_{n\in \mathbb{N}}$ converges weakly to $\mu^*$. If we are able to identify $\mu^*$ with $\mathcal{L}(X)$, where $\mathcal{L}(X)$ is the distribution of $X$, then we conclude that the whole sequence $\{\mu^{\alpha_n,\beta}\}_{n\in \mathbb{N}}$ weakly convergent to $\mathcal{L}(X)$ in $D_{J_1}([0,T],\mathbb{R}^d)$.
Let $\mathcal{L}_{b,\sigma}^{\alpha,\beta}$ and $ \mathcal{L}_{b,\sigma}$ be the operators defined in \eqref{Lf-alpha} and \eqref{Lf}, respectively. For any  $f\in C_b^2(\mathbb{R}^d)$, we obtain that the process
\begin{align*}
 f(X_t^{\alpha,\beta}) - f(x) - \int_0^t \mathcal{L}_{b,\sigma}^{\alpha,\beta}f(X_t^{\alpha,\beta})
\end{align*}
is a $\mathcal{F}_t-$martingale.

We denote by $\EX^{\mu^\alpha}$ as the expectation with respect to the probability measure $\mu^\alpha$ and $\EX^{\mu^*}$ as the expectation with respect to the probability measure $\mu^*$. We denote by $\eta(t)$ the canonical process in $D_{J_1}([0,T],\mathbb{R}^d)$. For every $0\le s_0 < s_1 < \cdots < s_m \le s < t$ and $f_0, f_1, \cdots, f_m \in C_b (\mathbb{R}^d)$, we have
\begin{equation}\label{M2}
\EX^{\mu^{\alpha}}\left[ \left( f(\eta_t)-f(\eta_s)-\int_s^t \mathcal{L}_{b,\sigma}^{\alpha,\beta}f(\eta_r)dr  \right) f_0(\eta_{s_0})\cdots f_m(\eta_{s_m})\right]=0.
\end{equation}
By \eqref{M2} , we have
\begin{align*}
&\quad \EX^{\mu^*}\left [ \left(f(\eta_t)-f(\eta_s)-\int_s^t\mathcal{L}_{b,\sigma}f(\eta_r)dr\right) f_0(\eta_{s_0})\cdots f_m(\eta_{s_m})\right]\\
& = \lim_{n \rightarrow \infty}\EX^{\mu^{\alpha_n}} \left[ \left(f(\eta_t)-f(\eta_s)-\int_s^t\mathcal{L}_{b,\sigma}f(\eta_r)dr\right) f_0(\eta_{s_0})\cdots f_m(\eta_{s_m})\right]\\
& = \lim_{n \rightarrow \infty}\EX^{\mu^{\alpha_n}} \Big[ \left(f(\eta_t)-f(\eta_s)-\int_s^t\mathcal{L}_{b,\sigma}f(\eta_r)dr\right) f_0(\eta_{s_0})\cdots f_m(\eta_{s_m})\\
& \quad \quad \quad\quad\quad\quad -\left(f(\eta_t)-f(\eta_s)-\int_s^t \mathcal{L}_{b,\sigma}^{\alpha_n,\beta} f(\eta_r)dr\right) f_0(\eta_{s_0})\cdots f_m(\eta_{s_m})\Big]\\
& = \lim_{n \rightarrow \infty}\EX^{\mu^{\alpha_n}} \left[ \left(\int_s^t \mathcal{L}_{b,\sigma}^{\alpha_n,\beta} f(\eta_r)-\mathcal{L}_{b,\sigma}f(\eta_r) dr \right) f_0(\eta_{s_0})\cdots f_m(\eta_{s_m}) \right].
\end{align*}
By Lemma \ref{lem2.4}, Lemma \ref{lem2.5} and Lemma \ref{lem2.6}, choosing a suitable $M>0$, we obtain
\begin{align}\label{M4}
&\quad\lim_{n\rightarrow \infty} \EX^{\mu^{\alpha_n}}\left[ \left(\int_s^t \mathcal{L}_{b,\sigma}^{\alpha_n, \beta}f(\eta_r)-\mathcal{L}_{b,\sigma}f(\eta_r) dr\right) f_0(\eta_{s_0})\cdots f_m(\eta_{s_m})\right]\nonumber \\
&\le\lim_{n\rightarrow \infty}  C\EX^{\mu^{\alpha_n}}\left[\int_s^t | \mathcal{L}_{b,\sigma}^{\alpha_n, \beta}f(\eta_r)-\mathcal{L}_{b,\sigma}f(\eta_r) |dr \right] \nonumber \\
&\le \lim_{n\rightarrow \infty}  C\EX^{\mu^{\alpha_n}}\left[\int_s^t | \mathcal{L}_{b,\sigma}^{\alpha_n, \beta}f(\eta_r)-\mathcal{L}_{b,\sigma}f(\eta_r) |dr; \sup_{s \le r \le t}|\eta_r| \le M\right] \nonumber \\
&\quad+\lim_{n \rightarrow \infty}  C\EX^{\mu^{\alpha_n}}\left[\int_s^t |\mathcal{L}_{b,\sigma}^{\alpha_n, \beta}f(\eta_r)-\mathcal{L}_{b,\sigma}f(\eta_r) |dr; \sup_{s\le r \le t}|\eta_r|> M\right] \nonumber \\
&=0.
\end{align}
Hence, according to \eqref{M4}, we can conclude that
\begin{align*}
 \EX^{\mu^*}\left[\left(f(\eta_t)-f(\eta_s)-\int_s^t\mathcal{L}_{b,\sigma}f(\eta_r)dr\right) f_0(\eta_{s_0})\cdots f_m(\eta_{s_m})\right]=0.
\end{align*}
This means that $f(\eta_t)-f(\eta_0)-\int_0^t\mathcal{L}_{b,\sigma}f(\eta_s)ds$ is a martingale under $\mu^*$. By using the argument is similar to  \cite[Theorem 6.2]{Cerrai}, we say that a probability measure $\mu^*$ solves the martingale problem which induces a weak solution of the SDE \eqref{e2}, associated with $\mathcal{L}_{b,\sigma}$. Thus, we can conclude that $\mu^*=\mathcal{L}(X)$.
\qed\bigskip

 \begin{remark}
Now, we have shown  the solution  of an SDE driven by a non-symmetric $\alpha$-stable process  converged weakly to that  of an SDE driven by  a Brownian motion. From the proof of Theorem \ref{thm1}, for any $\delta>0$, we observe that
 \begin{align*}
 \int_0^. \int_{|z|>1} zN^{\alpha,\beta}(dt,dz) \rightarrow 0,  \ \   \int_0^. \int_{|z|\le 1} z\tilde{N}^{\alpha,\beta}(dt,dz) \rightarrow B. , \ as\  \alpha\rightarrow 2
 \end{align*}
weakly in $D_{J_1}([0,T],\mathbb{R}^d)$, where the process \  $ \int_0^. \int_{|z|\le 1} z\tilde{N}^{\alpha,\beta}(dt,dz)$\  is  the compensated sum
of small jumps, and  is said to be ``central part'', which plays a vital part. $ \int_0^. \int_{|z|>1} zN^{\alpha,\beta}(ds,dz)$\  describing the ``large jumps'', and is said to be ``non-symmetric part''. However, in case of non-symmetric noise, the process \ $ \int_0^. \int_{|z|\le 1} z\tilde{N}^{\alpha,\beta}(dt,dz) $  is non-symmetric, which converges to a Brownian motion in $D_{J_1}([0,T],\mathbb{R}^d)$ and in law when $\alpha$ is close to 2. Moreover, the SDE driven by   $ \int_0^. \int_{|z|\le 1} z\tilde{N}^{\alpha,\beta}(dt,dz)$  is similar to the SDE driven by Brownian motion in $D_{J_1}([0,T],\mathbb{R}^d)$ and in law.
 \end{remark}

\section{\bf Proof of Theorem 2.2}  \label{Weak Convergence}

In this section we will derive the rate of weak convergence stated in Theorem \ref{thm2}. For this purpose we will use the backward Kolmogorov equation.
Let's consider the equation
\begin{equation}\label{Keq}
\begin{aligned}
&(\partial_t+\mathcal{L}_{b,\sigma})u(t,x)=0, t\in[0,T]\\
&u(T,x)=f(x),
\end{aligned}
\end{equation}
where
\begin{align*}
\mathcal{L}_{b,\sigma} u(t,x)= b_i(x)\partial^iu(t,x)+\text{Tr}\left( \sigma \sigma^*H_x\big(u(t,x)\big)\right),
\end{align*}
which denotes the generator of $X_t$.
\begin{lemma}\label{lem4.1}
If Assumption \ref{asp2} is satisfied, then for each $f\in  {C}^{2,\gamma}(\mathbb{R}^d)$, the equation \eqref{Keq} exists a unique solution $u\in  {C}^{2,\gamma}(H)$.
\end{lemma}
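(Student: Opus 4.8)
The plan is to convert the backward terminal-value problem \eqref{Keq} into a forward Cauchy problem and then invoke the classical Hölder (Schauder) theory for linear second-order parabolic equations. Setting $v(t,x):=u(T-t,x)$ turns \eqref{Keq} into
\begin{equation*}
\partial_t v(t,x)=\mathcal{L}_{b,\sigma}v(t,x),\qquad v(0,x)=f(x),
\end{equation*}
a standard parabolic Cauchy problem whose second-order part is $a_{ij}(x)\partial^2_{ij}$ with $a=\sigma\sigma^*$ and whose first-order part is $b_i(x)\partial_i$. It therefore suffices to produce a unique classical solution $v$ with the asserted regularity and then undo the time reversal.

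First I would verify that the coefficients meet the regularity the theory demands. The entries of the diffusion matrix are $a_{ij}=\sum_{k=1}^d \sigma_{ik}\sigma_{jk}$; since each $\sigma_{ij}\in C^{2,\gamma}(\mathbb{R}^d)$ by Assumption \ref{asp2} and $C^{2,\gamma}(\mathbb{R}^d)$ is closed under finite sums and products, we obtain $a_{ij}\in C^{2,\gamma}(\mathbb{R}^d)$, and likewise $b_i\in C^{2,\gamma}(\mathbb{R}^d)$. In particular all coefficients are bounded, have bounded derivatives, and are globally $\gamma$-Hölder continuous, which is exactly the input required for the Schauder estimates. Granting the parabolicity of $a=\sigma\sigma^*$, the classical existence theorem for linear parabolic equations in Hölder classes (Friedman, or Ladyženskaja–Solonnikov–Ural'ceva) yields a bounded solution $v$ whose spatial derivatives up to second order are continuous in $(t,x)$ and locally $\gamma$-Hölder in $x$. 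Reversing time returns a solution $u\in C^{2,\gamma}(H)$ of \eqref{Keq}, and continuity of $\partial_t u=-\mathcal{L}_{b,\sigma}u$ is then automatic from the spatial regularity of $u$.

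For uniqueness I would argue probabilistically: if $u_1,u_2\in C^{2,\gamma}(H)$ both solve \eqref{Keq}, then $w:=u_1-u_2$ solves the same equation with terminal datum $0$. Applying It\^o's formula to $s\mapsto w(s,X_s)$, where $X_s$ solves \eqref{e2} started from $x$ at time $t$, the bounded-variation terms cancel by $(\partial_t+\mathcal{L}_{b,\sigma})w=0$ and only a martingale survives, so taking expectations gives $w(t,x)=\mathbb{E}[w(T,X_T)]=0$. The same computation simultaneously identifies the solution with the Feynman--Kac representation $u(t,x)=\mathbb{E}[f(X_T^{t,x})]$, which is convenient for the later estimate \eqref{wr}.

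The main obstacle is that Assumption \ref{asp2} as stated does not explicitly record the uniform parabolicity of $\sigma\sigma^*$, on which the Schauder existence argument rests. If nondegeneracy is available, the only genuine work is the parabolic Schauder a priori estimate, which is standard. If $\sigma\sigma^*$ is permitted to degenerate, classical Schauder theory no longer applies, and I would instead construct the solution directly from $u(t,x)=\mathbb{E}[f(X_T^{t,x})]$: differentiating the stochastic flow $x\mapsto X_s^{t,x}$ twice, which is justified purely by the $C^{2,\gamma}$ smoothness of $b$ and $\sigma$ rather than by ellipticity, and then differentiating under the expectation, yields the $C^{2,\gamma}$ regularity of $u$, with the Hölder bound on $D_x^2 u$ inherited from the $\gamma$-Hölder continuity of the second derivatives of the coefficients.
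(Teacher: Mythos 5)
The paper does not actually prove this lemma at all: it disposes of it with a citation to \cite[Theorem 4]{rate}, and the argument in that reference is probabilistic -- the candidate $u(t,x)=\EX[f(X_T^{t,x})]$ is shown to lie in $C^{2,\gamma}$ by differentiating the stochastic flow $x\mapsto X_s^{t,x}$ under the expectation, and the PDE is then verified via the Markov property. So it is your \emph{fallback} route, not your primary one, that matches the reference. Your primary route through parabolic Schauder theory cannot be made to work under the stated hypotheses: Assumption \ref{asp2} imposes only $C^{2,\gamma}$ regularity on $b$ and $\sigma$ and contains no nondegeneracy condition, so $\sigma\sigma^*$ may degenerate (even vanish identically), and there is no uniform parabolicity to feed into Friedman or Lady\v{z}enskaja--Solonnikov--Ural'ceva. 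You correctly flag this obstruction yourself, and the probabilistic construction is the right repair; what each approach buys is clear -- Schauder would give interior smoothing from rough data but needs ellipticity, whereas the flow method needs smooth data ($f\in C^{2,\gamma}$, which is assumed) but no ellipticity at all.

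Two points in the probabilistic route still need care. First, your uniqueness computation (It\^o's formula applied to $s\mapsto w(s,X_s)$) presupposes that $w$ is already a classical solution, so it settles uniqueness but not the complementary fact that the Feynman--Kac candidate actually satisfies \eqref{Keq}; for that one uses the Markov identity $u(t,x)=\EX[u(t+h,X^{t,x}_{t+h})]$ together with Dynkin's formula and the already-established spatial regularity of $u$ to conclude $\partial_t u=-\mathcal{L}_{b,\sigma}u$. Second, ``differentiate the flow twice and pass the derivative through the expectation'' requires moment bounds on $\nabla_x X^{t,x}_s$ and $\nabla^2_x X^{t,x}_s$ (available because $Db$ and $D\sigma$ are bounded) and a propagation-of-H\"{o}lder argument for the second derivative of the flow; these steps are standard (Kunita's theory of stochastic flows) but they are precisely the content of the cited theorem, so a self-contained proof would have to carry them out rather than assert them.
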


A proof of this result can be found in R. Mikulevicius \cite[Theorem 4]{rate}.  In order to establish the rate of weak convergence under  Assumption \ref{asp2}, we need to prove weak convergence firstly. Since this provement is quite similar to the argument of Theorem \ref{thm1},  we only establish a uniform estimate in the following lemma.

\begin{lemma}\label{lem4.2}
If  Assumption \ref{asp2} is satisfied, then for
every $ \theta \in (0,\alpha_0)$,
\begin{equation}\label{solution1}
    \sup_{\alpha\geq\alpha_0} \EX [\sup_{0\leq t\leq T} |X^{
    \alpha,\beta}_t|^\theta] <\infty,
\end{equation}
where $\alpha_0 \in (\frac{3}{2}, 2).$
\end{lemma}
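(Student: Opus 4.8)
The plan is to rerun the argument of Lemma \ref{lem2.3}, but with the scale-tuned Lyapunov function
\[
f_p(x)=(1+|x|^2)^{p/2},\qquad p\in(0,\alpha_0),
\]
so that the exponent of the test function is matched to the moments that the L\'evy measure actually supplies. Under Assumption \ref{asp2} the coefficients $b$ and $\sigma$ and their derivatives up to second order are bounded, so the estimates are even cleaner than before. First I would record the elementary growth bounds $|\nabla f_p(x)|\le C(1+|x|^2)^{(p-1)/2}$ and $\|H(f_p)(x)\|\le C(1+|x|^2)^{(p-2)/2}$, both of which are dominated by $f_p(x)$ because $p<2$, together with the increment bound $|f_p(x+y)-f_p(x)|\le C|y|(1+|x|^2)^{(p-1)/2}+C|y|^p$.

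Next I would apply It\^o's formula exactly as in \eqref{ito formula}, writing $df_p(X^{\alpha,\beta}_t)=(I_1+I_2+I_3)\,dt+I_4+dM_t$ with a local martingale $M_t$, and estimate the four terms uniformly in $\alpha\ge\alpha_0$. For the drift term, boundedness (or at most linear growth) of $b$ gives $|I_1|\le C(1+|x|^2)^{(p-1)/2}\le Cf_p$. For the large-jump term, $|\sigma_i z_i|\le C|z_i|$ together with the increment bound yields $|I_2|\le Cf_p\int_{|z|>\delta}|z|\,\nu^{\alpha,\beta}(dz)+C\int_{|z|>\delta}|z|^p\,\nu^{\alpha,\beta}(dz)$, and both integrals are finite uniformly in $\alpha\ge\alpha_0$ by \eqref{eq3.1}, the second one precisely because one may take $l\in(p,\alpha_0]$ and $\vartheta=p<l\le\alpha$ in Lemma \ref{lem2.2}; hence $|I_2|\le C(f_p+1)$. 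For the compensated small-jump term, a second-order Taylor expansion with $(1+|x+sy|^2)\ge c(1+|x|^2)$ for $|y|\le\delta$ gives $|I_3|\le C(1+|x|^2)^{(p-2)/2}\int_{|z|\le\delta}|z|^2\,\nu^{\alpha,\beta}(dz)\le C$, again by \eqref{eq3.1}; and $|I_4|\le Cf_p\int_{\delta<|z|\le1}|z|\,\nu^{\alpha,\beta}(dz)\le Cf_p$. Collecting these estimates produces
\[
f_p(X^{\alpha,\beta}_t)-f_p(x)\le C\int_0^t f_p(X^{\alpha,\beta}_s)\,ds+C+M_t,
\]
with $C$ independent of $\alpha\ge\alpha_0$.

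Then I would invoke the stochastic Gronwall inequality \cite[lemma 3.8]{SGI} to conclude, for every $\theta'\in(0,1)$,
\[
\sup_{\alpha\ge\alpha_0}\EX\sup_{0\le t\le T}f_p(X^{\alpha,\beta}_t)^{\theta'}\le C(\theta',T)<\infty.
\]
Since $f_p(x)^{\theta'}\ge|x|^{p\theta'}$, this already gives $\sup_{\alpha\ge\alpha_0}\EX\sup_{0\le t\le T}|X^{\alpha,\beta}_t|^{p\theta'}<\infty$. Letting $p\uparrow\alpha_0$ and $\theta'\uparrow1$, the product $p\theta'$ exhausts the interval $(0,\alpha_0)$: given any $\theta\in(0,\alpha_0)$ it suffices to pick $p\in(\theta,\alpha_0)$ and $\theta'=\theta/p\in(0,1)$, which yields \eqref{solution1}.

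The only genuinely $\alpha$-sensitive point, and the reason the moment exponent cannot be pushed up to or beyond $\alpha_0$, is the uniform finiteness of the tail integral $\int_{|z|>\delta}|z|^p\,\nu^{\alpha,\beta}(dz)$ entering $I_2$; by \eqref{eq3.1} this is bounded uniformly in $\alpha\ge\alpha_0$ exactly when $p<\alpha_0$, which reflects the heavy tails of the non-symmetric $\alpha$-stable L\'evy measure. I expect this tail bound, rather than the It\^o bookkeeping, to be the crux; every remaining step is a routine adaptation of Lemma \ref{lem2.3}.
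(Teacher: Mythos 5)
Your argument is correct, but it is not the route the paper takes. The paper first reduces to $\theta\in[1,\alpha_0)$ (the case $\theta\in(0,1)$ being immediate from H\"older or Lemma \ref{lem2.3}), then works directly with the integral form of the equation: it splits the large-jump integral into its compensated part plus the compensator, uses the boundedness of $b$ and $\sigma$ supplied by Assumption \ref{asp2}, and bounds each term separately --- Burkholder's inequality together with $\sup_{\alpha\ge\alpha_0}\int_{|z|\le1}|z|^2\nu^{\alpha,\beta}(dz)<\infty$ for the small-jump martingale, and the moment inequality of \cite[Lemma 2.3]{Liu} for the large-jump martingale, which is where the constraint $\theta<\alpha_0$ enters through $\int_{|z|>1}|z|^\theta\nu^{\alpha,\beta}(dz)$. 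You instead rerun the Lyapunov/It\^o/stochastic-Gronwall scheme of Lemma \ref{lem2.3} with $f_p(x)=(1+|x|^2)^{p/2}$ and recover the exponent range by letting $p\theta'$ sweep $(0,\alpha_0)$; you correctly identify the same binding constraint, namely the uniform finiteness of $\int_{|z|>\delta}|z|^p\nu^{\alpha,\beta}(dz)$ for $p<\alpha_0$. What the paper's route buys is brevity under boundedness of the coefficients (no Lyapunov bookkeeping, no Gronwall); what yours buys is robustness, since it would survive with $\sigma$ merely Lipschitz and $b$ dissipative as in Assumption \ref{asp1}. Two small points to tidy: the hypothesis of Lemma \ref{lem2.2} restricts $\vartheta\in[1,l)$, so your invocation with $\vartheta=p$ is only literally licensed for $p\ge1$ (harmless, since you only need $p$ near $\alpha_0>\tfrac32$, and $p<1$ is already covered by Lemma \ref{lem2.3}); and you should note that $M_t$ is a genuine local martingale --- the integrand over $\{|z|>1\}$ is $\nu^{\alpha,\beta}$-integrable precisely because $p<\alpha$ --- so that \cite[Lemma 3.8]{SGI} applies.
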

\begin{proof}
We need only to prove \eqref{solution1} on condition that $\theta \in [1,\alpha_0)$ because the case where $\theta \in (0,1)$ is an immediate corollary by H${\rm \ddot{o}}$lder's inequality.  In fact, we have
\begin{align*}
X_t^{\alpha,\beta}&=x+\int_0^t b(X_s^{\alpha,\beta})ds +\int_0^t\int_{|z|\le 1}\sigma(X_{s-}^{\alpha,\beta})z \tilde{N}^{\alpha,\beta}(ds,dz)
+\int_0^t\int_{|z|>1}\sigma(X_{s-}^{\alpha,\beta})zN^{\alpha,\beta}(ds,dz)\\
&=x+\int_0^t b(X_s^{\alpha,\beta})ds +\int_0^t\int_{|z|\le1}\sigma(X_{s-}^{\alpha,\beta})z \tilde{N}^{\alpha,\beta}(ds,dz)\\
&\quad\ +\int_0^t\int_{|z|>1}\sigma(X_{s-}^{\alpha,\beta})z\nu^{\alpha,\beta}dz+\int_0^t\int_{|z|>1}\sigma(X_{s-}^{\alpha,\beta})z\tilde{N}^{\alpha,\beta}(ds,dz).
\end{align*}
First, there exists a positive constant $M$, such that\ $|b(x)|\le M$ and $|\sigma(x)|\le M$, for all $x\in\mathbb{R}^d$.  We obtain
\begin{align}\label{E1}
&\quad\EX(\sup_{0\le t \le T}|\int_0^tb(X_s^{\alpha,\beta})ds|^{\theta})
\le M^\theta T^\theta < \infty.
\end{align}
 By using Burkholder's inequality \cite[Theorem 26.12]{12}, and Lemma \ref{lem2.2}, we have
\begin{align}\label{E2}
&\quad\sup_{\alpha\geq \alpha_0}\EX\sup_{0\le t \le T}|\int_0^t\int_{|z|\le1} \sigma(X_{s-}^{\alpha,\beta})z\tilde{N}^{\alpha,\beta}(ds,dz)|^\theta \nonumber \\
&\le C_\theta \sup_{\alpha\geq\alpha_0}\EX|\int_0^T\int_{|z|\le 1} |\sigma(X_{s-}^{\alpha,\beta})z|^2 \nu^{\alpha,\beta}(dz)ds|^\frac{\theta}{2} \nonumber \\
&\le C_\theta M^\theta T^\frac{\theta}{2}\sup_{\alpha\geq \alpha_0}|\int_{|z|\le 1} |z|^2 \nu^{\alpha,\beta}(dz)|^\frac{\theta}{2} <\infty.
\end{align}
By applying \cite[Lemma 2.3]{Liu}, we obtain
\begin{align}\label{E3}
&\quad\sup_{\alpha\geq \alpha_0}\EX\sup_{0\le t \le T}|\int_0^t\int_{|z|>1} \sigma(X_{s-}^{\alpha,\beta})z\tilde{N}^{\alpha,\beta}(ds,dz)|^\theta \nonumber \\
&\le \sup_{\alpha\geq \alpha_0}C\EX \left(\int_0^T\int_{|z|>1} |\sigma(X_{s-}^{\alpha,\beta})z|^\theta \nu^{\alpha,\beta}(dz)ds\right)
+ \sup_{\alpha\geq \alpha_0}C\EX \left(\int_0^T\int_{|z|>1}|\sigma(X_{s-}^{\alpha,\beta})z| \nu^{\alpha,\beta}(dz)ds\right)^\theta \nonumber \\
&\le C M^\theta\sup_{\alpha\geq\alpha_0}[T\int_{|z|>1} |z|^\theta \nu^{\alpha,\beta}(dz) + T^\theta(\int_{|z|>1}|z|\nu^{\alpha,\beta}(dz))^\theta] < \infty
\end{align}
and
\begin{align}\label{E4}
&\quad\sup_{\alpha\geq\alpha_0}\EX\sup_{0\le t \le T} |\int_0^t\int_{|z|>1}\sigma(X_{s-}^{\alpha,\beta})z\nu^{\alpha,\beta}(dz)ds|^\theta\le T^\theta M^{\theta} \sup_{\alpha\geq\alpha_0}\left(\int_{|z|>1}|z|\nu^{\alpha,\beta}(dz)\right)^\theta < \infty.
\end{align}
By using $(x+y)^\theta \le C_{\theta}(x^\theta +y^\theta)$ for $\theta\in(1,2)$, we have
\begin{equation}\label{E5}
\begin{aligned}
 \quad \sup_{\alpha\geq\alpha_0} \EX [\sup_{0\leq t\leq T} |X^{
    \alpha,\beta}_t|^\theta]
    \le C[\sup_{\alpha\geq\alpha_0}\EX(&\sup_{0\le t \le T}|\int_0^tb(X_s^{\alpha,\beta})ds|^{\theta} \\
    \quad+\sup_{\alpha\geq \alpha_0}\EX&\sup_{0\le t \le T}|\int_0^t\int_{|z|\le1} \sigma(X_{s-}^{\alpha,\beta})z\tilde{N}^{\alpha,\beta}(ds,dz)|^\theta  \\
    \quad+\sup_{\alpha\geq \alpha_0}\EX&\sup_{0\le t \le T}|\int_0^t\int_{|z|>1} \sigma(X_{s-}^{\alpha,\beta})z\tilde{N}^{\alpha,\beta}(ds,dz)|^\theta  \\
    \quad+\sup_{\alpha\geq\alpha_0}\EX&\sup_{0\le t \le T} |\int_0^t\int_{|z|>1}\sigma(X_{s-}^{\alpha,\beta})z\nu^{\alpha,\beta}(dz)ds|^\theta].
\end{aligned}
\end{equation}
By formulas \eqref{E1}-\eqref{E5}, we obtain
\begin{align*}
 \quad \sup_{\alpha\geq\alpha_0} \EX [\sup_{0\leq t\leq T} |X^{
    \alpha,\beta}_t|^\theta] < \infty.
\end{align*}
The proof is complete.
\qed\bigskip\end{proof}

Now, we proceed to the proof of Theorem \ref{thm2}.

~\\
 \textbf{\emph{Proof of Theorem 2.2.}}\ \ \
Let $\mathcal{L}^{\alpha,\beta}_{b,\sigma}$ be the generator of $X_t^{\alpha,\beta}$,
\begin{align*}
 \mathcal{L}^{\alpha,\beta}_{b,\sigma} u(t,x)&=b_i(x)\partial^i u(t,x)+\sum_{i=1}^d\int_{|z_i|>1} u(t,x+\sigma_i(x)z_i)-u(t,x) \ \nu^{\alpha,\beta_i}(dz_i)\\
&\quad +\sum_{i=1}^d\int_{|z_i|\le 1} u(t,x+\sigma_i(x)z_i)-u(t,x)-\sigma_i(x)z_i \cdot\nabla u(t,x) \nu^{\alpha,\beta_i}(dz_i).
\end{align*}
According to Lemma \ref{lem4.1}, there exists a unique solution $u(t,x)\in {C}^{2,\gamma}(H)$ to \eqref{Keq}, for each $f(x)\in  {C}^{2,\gamma}(\mathbb{R}^d)$. By using It\^{o} formula,  we have
\begin{align*}
\EX[f(X_t^{\alpha,\beta})-f(X_t)]&=\EX[u(t,X_t^{\alpha,\beta})-u(0,x_0)]\\
&=\EX [\int_0^t \partial_tu(s,X_s^{\alpha,\beta})+ \mathcal{L}_{b,\sigma}^{\alpha,\beta}u(s,X_s^{\alpha,\beta}) ds]\\
&= \EX[\int_0^t -\mathcal{L}_{b,\sigma}u(s,X_s^{\alpha,\beta})+ \mathcal{L}_{b,\sigma}^{\alpha,\beta} u(s,X_s^{\alpha,\beta})ds ].
\end{align*}
Notice that $\sigma$ and $b$ is bounded, then for any $\delta>0$ we have
\begin{align*}
&\quad\ \int_0^t(\mathcal{L}_{b,\sigma}^{\alpha,\beta} -\mathcal{L}_{b,\sigma}) u(s,X_s^{\alpha,\beta})ds\\
&=\sum_{i=1}^d\int_0^t\int_{|z_i|\le 1}  u(s,X_{s-}^{\alpha,\beta} +\sigma_i(X_{s-}^{\alpha,\beta})z_i) - u(s,X_{s-}^{\alpha,\beta})-\sigma_i(X_{s-}^{\alpha,\beta})z_i\cdot\partial_x u(s,X_{s-}^{\alpha,\beta}) \nu^{\alpha,\beta_i}(dz_i)ds\\
&\quad -\int_0^tTr[\sigma\sigma^*(X_{s-}^{\alpha,\beta})H_x\big(u(s,X_{s-}^{\alpha,\beta})\big)] ds\\
&\quad + \sum_{i=1}^d\int_0^t\int_{|z_i|>1}  u(s,X_{s-}^{\alpha,\beta} +\sigma_i(X_{s-}^{\alpha,\beta})z_i) - u(s,X_{s-}^{\alpha,\beta}) \nu^{\alpha,\beta_i}(dz_i)ds\\
&= \sum_{i=1}^d\int_0^t\int_{|z_i|\le \delta}  u(s,X_{s-}^{\alpha,\beta} +\sigma_i(X_{s-}^{\alpha,\beta})z_i) - u(s,X_{s-}^{\alpha,\beta})-\sigma_i(X_{s-}^{\alpha,\beta})z_i\cdot\partial_x u(s,X_{s-}^{\alpha,\beta}) \nu^{\alpha,\beta_i}(dz_i)ds \\
&\quad -\int_0^tTr[\sigma\sigma^*(X_{s-}^{\alpha,\beta})H_x\big(u(s,X_{s-}^{\alpha,\beta})\big)]ds \\
&\quad + \sum_{i=1}^d\int_0^t\int_{|z_i|>\delta}  u(s,X_{s-}^{\alpha,\beta} +\sigma_i(X_{s-}^{\alpha,\beta})z_i)
 - u(s,X_{s-}^{\alpha,\beta}) \nu^{\alpha,\beta_i}(dz_i)ds\\
&\quad - \sum_{i=1}^{d}\int_0^t\int_{\delta\le|z_i|\le1}\sigma_i(X_{s-}^{\alpha,\beta})z_i\cdot\partial_xu(s,X_{s-}^{\alpha,\beta}) \nu^{\alpha,\beta_i}(dz_i)ds.
\end{align*}
For convenience, we denote
\begin{align*}\nonumber
\mathcal{A}u(s,X_{s}^{\alpha,\beta})&=\sum_{i=1}^{d}\int_{ |z_i|\leq \delta}u(s,X_{s-}^{\alpha,\beta} +\sigma_i(X_{s-}^{\alpha,\beta})z_i) - u(s,X_{s-}^{\alpha,\beta})-\sigma_i(X_{s-}^{\alpha,\beta})z_i\cdot\partial_x u(s,X_{s-}^{\alpha,\beta}) \nu^{\alpha,\beta_i}(dz_i),\nonumber\\
\mathcal{B}u(s,X_{s}^{\alpha,\beta})&=\sum_{i=1}^{d} \int_{ |z_i|>\delta} u(s,X_{s-}^{\alpha,\beta} +\sigma_i(X_{s-}^{\alpha,\beta})z_i)
 - u(s,X_{s-}^{\alpha,\beta})\nu^{\alpha,\beta_i}(dz_i),\nonumber \\
\mathcal{C}u(s,X_{s}^{\alpha,\beta})&=\sum_{i=1}^{d}\int_{\delta < |z_i| \leq 1}\sigma_i(X_{s-}^{\alpha,\beta})z_i
\cdot\partial_xu(s,X_{s-}^{\alpha,\beta})\nu^{\alpha,\beta_i}(dz_i), \nonumber\\
\mathcal{D}u(s,X_{s}^{\alpha,\beta})&=\sum_{i=1}^{d}\int_0^1dr \int_0^r \int_{ |z_i|\leq
\delta}[\sigma_i(X_{s-}^{\alpha,\beta})z_iH_x\big(u(s,X_{s-}^{\alpha,\beta})\big)\cdot
 \sigma_i(X_{s-}^{\alpha,\beta})z_i] \nu^{\alpha,\beta_i}(dz_i)ds.\nonumber
\end{align*}
By Taylor's formula, we get
$$
u(t,x+y)-u(t,x)-y \cdot \nabla_x u(t,x)=\int_0^1dr \int_0^r yH_x\big(u(t,x+\xi y)\big)\cdot
yd\xi.
$$
Then, we have
\begin{align*}\nonumber
&\quad |\EX\int_0^t\mathcal{A}\big(u(s,X_{s}^{\alpha,\beta})\big)-\mathcal{D}\big(u(s,X_{s}^{\alpha,\beta})\big)ds|\\
&\le\sum_{i=1}^{d}\EX\int_0^t
\int_{ |z_i|\leq \delta}\int_0^1dr \int_0^r
  \big|\sigma_i(X_{s-}^{\alpha,\beta})z_i\Big(H_x\big(u(s,X_{s-}^{\alpha,\beta}+\sigma_i(X_{s-}^{\alpha,\beta})z_i\xi)\big)\nonumber\\
  &\quad\quad-H_x\big(u(s,X_{s-}^{\alpha,\beta})\big)\Big)\cdot \sigma_i(X_{s-}^{\alpha,\beta})z_i\big|d\xi \nu^{\alpha,\beta_i}(dz_i)ds\nonumber \\
  &\le C\sum_{i=1}^{d}\EX\int_0^t
\int_{ |z_i|\leq \delta}\int_0^1dr \int_0^r
|z_i|^2\big|H_x\big(u(s,X_{s-}^{\alpha,\beta}+\sigma_i(X_{s-}^{\alpha,\beta})z_i\xi)\big)
  -H_x\big(u(s,X_{s-}^{\alpha,\beta})\big)\big|d\xi \nu^{\alpha,\beta_i}(dz_i)ds.
\end{align*}
 Since $u\in {C}^{2,\gamma}(H)$, we can choose suitable $\delta<< 1$ such that $\EX\int_0^t|\mathcal{A}\big(u(s,X_{s}^{\alpha,\beta})\big)-\mathcal{D}\big(u(s,X_{s}^{\alpha,\beta})\big)|ds$ is sufficient small. For a fixed $\delta>0$, by using Lemma~\ref{lem2.2}, we obtain
\begin{equation}\nonumber
|\int_{|z_i|\le \delta}|z_i|^2 \nu^{\alpha,\beta_i}(dz_i)-2|\le C(2-\alpha).
\end{equation}
 Then, we have
 \begin{center}
 \begin{align*}\nonumber
&\quad\ |\EX\int_0^t \mathcal{D}\big(u(s,X_s^{\alpha,\beta})\big)-\text{Tr}\left( \sigma \sigma^*(X_{s-}^{\alpha,\beta})H_x\big(u(s,X_{s-}^{\alpha,\beta})\big)\right)ds|\nonumber\\
  &\le\sum_{j,k=1}^d \sum_{i=1}^d\int_0^t|\sigma_{ki}\sigma_{ji}\partial_{kj}^2u(s,X_{s-}^{\alpha,\beta})\int_0^1dr\int_0^r d\xi(\int_{|z_i|\le \delta}|z_i|^2 \nu^{\alpha,\beta_i}(dz_i)-2)|ds\nonumber\\
  &\le C (2-\alpha).
 \end{align*}
 \end{center}
Furthermore, we have
 \begin{align*}\nonumber
 |\mathcal{B}\big(u(s,X_s^{\alpha,\beta})\big)|\le c \sum_{i=1}^d \int_{|z_i|>\delta}|z_i| \nu^{\alpha,\beta_i}(dz_i)\le C(2-\alpha),
 \end{align*}
 and
 \begin{align*} \nonumber
\quad|\EX\int_0^t\ \mathcal{C}\big(u(s,X_{s}^{\alpha,\beta})\big)\ ds|&\le C\sum_{i=1}^{d}\int_{\delta<|z_i|\leq1} |z_i| \nu^{\alpha,\beta_i}(dz_i)\\
&\le C\sum_{i=1}^{d}\int_{|z_i|>\delta}|z_i| \nu^{\alpha,\beta_i}(dz_i)\le C(2-\alpha).
\end{align*}
Thus, we obtain
\begin{align*}\nonumber
&\quad\ \big|\EX\int_0^t \mathcal{L}_{b,\sigma}^{\alpha,\beta}
\big(u(s,X_s^{\alpha,\beta})\big)-\mathcal{L}_{b,\sigma}\big(u(s,X_s^{\alpha,\beta})\big) \ ds\big|\nonumber\\
&=\big|\EX\int_0^t(\mathcal{A}+\mathcal{B}-\mathcal{C})\big(u(s,X_s^{\alpha,\beta})\big)-\text{Tr}\left( \sigma \sigma^*(X_{s-}^{\alpha,\beta})H_x\big(u(s,X_{s-}^{\alpha,\beta})\big)\right)\ ds\big|\nonumber\\
&=\big|\EX\int_0^t(\mathcal{A}-\mathcal{D})\big(u(s,X_s^{\alpha,\beta})\big)ds\big|\\
&\quad+\big|\EX\int_0^t\mathcal{D}\big(u(s,X_s^{\alpha,\beta})\big)-\text{Tr}\left( \sigma \sigma^*(X_{s-}^{\alpha,\beta})H_x\big(u(s,X_s^{\alpha,\beta})\big)\right)ds\big|\\
&\quad+\big|\EX\int_0^t\mathcal{B}\big(u(s,X_s^{\alpha,\beta})\big)ds\big|+\big|\EX\int_0^t\mathcal{C}\big(u(s,X_s^{\alpha,\beta})\big)ds\big| \\
&\le C(2-\alpha).
\end{align*}
The proof is complete. \qed\bigskip

\begin{remark}
The estimate \eqref{wr} implies that the weak convergence rate is $2-\alpha$, which is the optimal rate. Please see the following example for illustrating it.
\end{remark}

\begin{example}\label{example}
Let $L_t^\alpha$ and $B_t$ are one-dimensional symmetric $\alpha$-stable process ($\beta=0$) and standard Brownian motion, respectively. Then, according to \cite{moment}, we have
\begin{align*}
\EX|L_t^\alpha|=\frac{\Gamma(1-\frac{1}{\alpha})}{\int_0^\infty u^{-2}sin^2u du}\sqrt{t}.
\end{align*}
By using $\Gamma(1-x)\Gamma(x)=\frac{\pi}{sin(\pi x)}, \ x\in(0,1)$  \ and \  $\Gamma(\frac{1}{2})=\sqrt{\pi}$,  we have
\begin{align*}
\lim_{\alpha\rightarrow 2}\Big|\frac{\EX|L_t^\alpha|-\EX|B_t|}{2-\alpha}\Big|&=\lim_{\alpha\rightarrow 2}\Big|\frac{\Gamma(1-\frac{1}{\alpha})(\int_0^\infty u^{-2}sin^2u\  du )^{-1}- \frac{2}{\sqrt{\pi}}}{2-\alpha}\Big|\sqrt{t}\\
&=\lim_{\alpha\rightarrow 2}\Big|\frac{\frac{2\pi}{\Gamma(\frac{1}{\alpha})sin(\frac{\pi}{\alpha})}-2\sqrt{\pi}}{\pi(2-\alpha)}\Big|\sqrt{t}\\
&=\lim_{\alpha\rightarrow 2}\Big|\frac{2\big(\sqrt{\pi}-\Gamma(\frac{1}{\alpha})sin\frac{\pi}{\alpha} \ \big)}{\pi(2-\alpha)}\Big|\sqrt{t}\\
&=\lim_{\alpha\rightarrow 2}\Big| \frac{2}{\pi \alpha^2} \big( \Gamma'(\frac{1}{\alpha})sin\frac{\pi}{\alpha}+ \pi\Gamma(\frac{1}{\alpha})cos\frac{\pi}{\alpha}\ \big)\Big|\sqrt{t}\\
&=\frac{|\Gamma'(\frac{1}{2})|}{2\pi}\sqrt{t}\\
&=\frac{\gamma+2ln2}{2\sqrt{\pi}}\sqrt{t},
\end{align*}
where $\gamma$ is Euler-Mascheroni constant. Hence, for every\  $t\in[0,T]$,  we can obtain that there is a constant $C$, such that
\begin{equation}\label{optimal}
\big|\ \EX|L_t^\alpha|-\EX|B_t| \ \big| \geq C(2-\alpha).
\end{equation}
The estimate \eqref{optimal}  implies that the optimal weak convergence rate is $2-\alpha$.
\end{example}

\section{\bf Appendix: Proof of Lemma \ref{lem2.2} } \label{Appendix}

In this appendix we give the proof of Lemma \ref{lem2.2}.\\
\begin{proof} Let $\alpha \geq l$,
\begin{align*}\nonumber
\int_{|z|\le\delta} |z|^2 \nu^{\alpha,\beta}(dz)=\int_0 ^\delta \frac{ C_1 z^2}{|z|^{\alpha+1}} dz
+\int_{-\delta}^0 \frac{C_2 z^2}{|z|^{\alpha+1}} dz
=\frac{K_\alpha \delta^{2-\alpha}}{2-\alpha}, \nonumber
\end{align*}
\begin{align*} \nonumber
\int_{|z|>\delta}|z|^{\vartheta}\nu^{\alpha,\beta}(dz)=\int_{z>\delta} \frac{K_\alpha (1+\beta)}{2z^{\alpha+1-\vartheta}}dz + \int_{z< {-\delta}}\frac{K_\alpha(1-\beta)}{2z^{\alpha+1-\vartheta}}dz
=\frac{K_\alpha \delta^{\vartheta-\alpha}}{\alpha-\vartheta}. \nonumber
\end{align*}
Then by using\ $\lim\limits_{x \rightarrow 0}x\Gamma(x)=\lim\limits_{x\rightarrow 0} \frac{\pi x}{\Gamma(1-x) sin \pi x}=1$, we have
\begin{align*} \nonumber
\lim_{\alpha \rightarrow 2}\int_{ |z|\leq \delta}|z|^2\nu^{\alpha,\beta}(dz)
=\lim_{\alpha \rightarrow 2}\frac{\alpha(1-\alpha) \delta^{2-\alpha}}{(2-\alpha)\Gamma(2-\alpha)cos(\frac{\pi\alpha}{2})}=2, \nonumber
\end{align*}
\begin{align*} \nonumber
\lim_{\alpha \rightarrow 2} \int_{|z|>\delta}|z|^\vartheta \nu^{\alpha,\beta}(dz)
=\lim_{\alpha \rightarrow 2}\frac{2(2-\alpha) \delta^{\vartheta-\alpha}}{\alpha-\vartheta}=0. \nonumber
\end{align*}
Fix $\alpha\in(0,2)$,
\begin{align*} \nonumber
 \lim_{\delta\rightarrow 0+}\delta^{\alpha-2}\int_{ |z|\leq \delta} |z|^{2}
  \nu^{\alpha,\beta}(dz)= \lim_{\delta\rightarrow 0+}\frac{K_\alpha}{2-\alpha}<\infty. \nonumber
\end{align*}
Hence $ \lim\limits_{\delta\rightarrow 0+}\int_{ |z|\leq \delta} |z|^{2}
  \nu^{\alpha,\beta}(dz)=0. $

 Furthermore, by using L'Hospital's rule and $\lim\limits_{x \rightarrow 0} \frac{x\Gamma(x)}{x-1}=\lim\limits_{x\rightarrow 0}(x\Gamma(x))'=-1$, we have
\begin{align*}
\lim_{\alpha \rightarrow 2}\frac{\big|\int_{ |z|\leq \delta} |z|^{2}
  \nu^{\alpha,\beta}(dz)-2\big|}{2-\alpha}=\lim_{\alpha \rightarrow 2} \big|\frac{\alpha(1-\alpha) \delta^{2-\alpha}-2(2-\alpha)\Gamma(2-\alpha)cos(\frac{\pi\alpha}{2})}{(2-\alpha)}\big|=C.
\end{align*}
This result implies that
\begin{align*}
|\int_{ |z|\leq \delta} |z|^{2}
  \nu^{\alpha,\beta}(dz)-2|\le C(2-\alpha).
  \end{align*}
And let $\vartheta=1$, we get
\begin{align*}
 \int_{|z|>\delta}|z| \nu^{\alpha,\beta}(dz)\le C(2-\alpha).
\end{align*}
The proof is complete.\qed\end{proof}\bigskip

\section*{Ackonwledgement}
The authors ackonwledge support provided by NNSF of China (Nos. 11971186, 11971367), Science and Technology
Research Projects of Hubei Provincial Department of Education No. B2022077.

\end{document}